\newtheorem{thm}{Theorem}
\newtheorem{lemma}{Lemma}
\newtheorem{prop}{Proposition}
\newtheorem{defn}{Definition}
\newtheorem{remark}{Remark}
\newtheorem{ex}{Example}
\newtheorem{nt}{Notation}
\begin{document}

\title[Topological steps toward $\mathcal{S}\left(L(p,1)\right)$ via braids]
  {Topological steps toward the Homflypt skein module of the lens spaces $L(p,1)$ via braids}

\author{Ioannis Diamantis}
\address{ International College Beijing,
China Agricultural University,
No.17 Qinghua East Road, Haidian District,
Beijing, {100083}, P. R. China.}
\email{ioannis.diamantis@hotmail.com}

\author{Sofia Lambropoulou}
\address{ Departament of Mathematics,
National Technical University of Athens,
Zografou campus,
{GR-15780} Athens, Greece.}
\email{sofia@math.ntua.gr}
\urladdr{http://www.math.ntua.gr/~sofia}

\author{Jozef Przytycki}
\address{ Departament of Mathematics,
George Washington University,
Phillips Hall, Room 739,
DC 20052 Washington, U.S.A. and University of Gdansk}
\email{przytyck@gwu.edu}
\urladdr{http://home.gwu.edu/~przytyck/}

\keywords{Homflypt skein module, solid torus, Iwahori--Hecke algebra of type B, mixed links, mixed braids, lens spaces. }

\subjclass[2010]{57M27, 57M25, 57Q45, 20F36, 20C08}

\thanks{This research  has been co-financed by the European Union (European Social Fund - ESF) and Greek national funds through the Operational Program ``Education and Lifelong Learning" of the National Strategic Reference Framework (NSRF) - Research Funding Program: THALES: Reinforcement of the interdisciplinary and/or inter-institutional research and innovation. }

\setcounter{section}{-1}

\date{}

\begin{abstract}
In this paper we work toward the Homflypt skein module of the lens spaces $L(p,1)$, $\mathcal{S}(L(p,1))$, using braids. In particular, we establish the connection between $\mathcal{S}({\rm ST})$, the Homflypt skein module of the solid torus ST, and $\mathcal{S}(L(p,1))$ and arrive at an infinite system, whose solution corresponds to the computation of $\mathcal{S}(L(p,1))$. We start from the Lambropoulou invariant $X$ for knots and links in ST, the universal analogue of the Homflypt polynomial in ST, and a new basis, $\Lambda$, of $\mathcal{S}({\rm ST})$ presented in \cite{DL1}. We show that $\mathcal{S}(L(p,1))$ is obtained from $\mathcal{S}({\rm ST})$ by considering relations coming from the performance of braid band moves (bbm) on elements in the basis $\Lambda$, where the braid band moves are performed on any moving strand of each element in $\Lambda$. We do that by proving that the system of equations obtained from diagrams in ST by performing bbm on any moving strand is equivalent to the system obtained if we only consider elements in the basic set $\Lambda$.

\smallbreak

The importance of our approach is that it can shed light to the problem of computing skein modules of arbitrary c.c.o. $3$-manifolds, since any $3$-manifold can be obtained by surgery on $S^3$ along unknotted closed curves. The main difficulty of the problem lies in selecting from the infinitum of band moves some basic ones and solving the infinite system of equations.
\end{abstract}

\maketitle

\section{Introduction}\label{intro}

In this paper we relate the Homflypt skein module of the lens spaces $L(p,1)$, $\mathcal{S}(L(p,1))$, to the Homflypt skein module of the solid torus, $\mathcal{S}({\rm ST})$. This work is part of the PhD thesis of the first author \cite{D} and in \cite{DL3} we develop an algebraic approach to the computation of the Homflypt skein module of the lens space $L(p,1)$ via braids. 

\smallbreak

Skein modules are quotients of free modules over ambient isotopy classes of knots and links in a $3$-manifold by properly chosen skein relations. The skein module of a $3$-manifold $M$ based on the Homflypt skein relation is called the \textit{Homflypt skein module} of $M$, also known as \textit{Conway skein module} and as $3^{rd}$ \textit{skein module} (\cite{P, P2}). More precisely, let $M$ be an oriented $3$-manifold, $R=\mathbb{Z}[u^{\pm1},z^{\pm1}]$, $\mathcal{L}$ the set of all ambient isotopy classes of oriented links in $M$ and
let $S$ the submodule of $R\mathcal{L}$ generated by the skein expressions $u^{-1}L_{+}-uL_{-}-zL_{0}$, where $L_{+}$, $L_{-}$ and $L_{0}$ comprise a Conway triple represented schematically by the illustrations in Figure~\ref{skein}.

\begin{figure}[!ht]
\begin{center}
\includegraphics[width=1.7in]{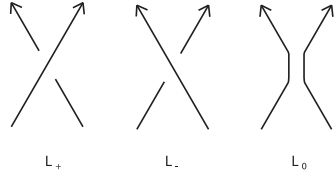}
\end{center}
\caption{The links $L_{+}, L_{-}, L_{0}$ locally.}
\label{skein}
\end{figure}

\noindent For convenience we allow the empty knot, $\emptyset$, and we add the relation $u^{-1} \emptyset -u\emptyset =zT_{1}$, where $T_{1}$ denotes the trivial
knot. Then the {\it Homflypt skein module} of $M$ is defined as:

\begin{equation*}
\mathcal{S} \left(M\right)=\mathcal{S} \left(M;{\mathbb Z}\left[u^{\pm 1} ,z^{\pm 1} \right],u^{-1} L_{+} -uL_{-} -zL{}_{0} \right)={\raise0.7ex\hbox{$
R\mathcal{L} $}\!\mathord{\left/ {\vphantom {R\mathcal{L} S }} \right. \kern-\nulldelimiterspace}\!\lower0.7ex\hbox{$ S  $}}.
\end{equation*}

\bigbreak

Skein modules of $3$-manifolds have become very important algebraic tools in the study of $3$-manifolds, since their properties renders topological information about the $3$-manifolds.

\smallbreak

The starting point for computing the Homflypt skein module of the lens space $L(p,1)$ is the Homflypt skein module of the solid torus, $\mathcal{S}({\rm ST})$, which is free as shown in \cite{HK} and \cite{Tu}. The reason is that ambient isotopy in ST is extended to ambient isotopy in $L(p,1)$ by adding extra moves, which reflect the surgery description of $L(p,1)$ and which are called band moves (for an illustration see Figure~\ref{bmov}). We start with a link $L$ in ST and have that $L\sim sl(L)$, where $sl(L)$ is the result of the performance of a band move on $L$.

\smallbreak

 In \cite{La3} $\mathcal{S}({\rm ST})$ is computed via knot algebras and Markov traces. More precisely, oriented links in ST are represented by the Artin braid groups of type B, $B_{1,n}$, and link isotopy in ST corresponds to braid equivalence moves in $B_{1,n}$ (\cite{La3, La4}). To the braid groups $B_{1,n}$ are associated Hecke type knot quotient algebras: the Hecke algebra of type B, $H_n(q,Q)$, the cyclotomic Hecke algebras of type B, $H_n(q,d)$, and the, so called in \cite{La3}, generalized Hecke algebras of type B, $H_{1,n}(q)$. Note that in \cite{La3} $H_{1,n}(q)$ is denoted as $H_{n}(q, \infty)$. Then, the universal analogue of the Homflypt polynomial, $X$, for links in the solid torus ST, is obtained from the generalized Hecke algebras of type B via a unique Markov trace constructed on them, which recovers $\mathcal{S}({\rm ST})$. In the algebraic language of \cite{La3} the basis of $\mathcal{S}({\rm ST})$ is given in open braid form by the set $\Lambda^{\prime}$ in Eq.~(\ref{Lpr}) (for an illustration see Figure~\ref{basel}). The algebraic setting of $L(p,1)$ is the same as ST, and thus, the invariant $X$ is appropriate for being extended to the lens space $L(p,1)$. In order to extend $X$ to an invariant of knots and links in $L(p,1)$, we need to solve an infinite system of equations resulting from the braid band moves, that is, equivalence moves between mixed braids which are band moves between their closures. Namely, we force

\begin{equation}\label{eq}
X_{\widehat{a}}\ =\ X_{\widehat{sl(a)}}, 
\end{equation}

\noindent where $\widehat{a}$ is an element of a braid group of type B and $\widehat{sl(a)}$ denotes a braided sliding of the closed braid $\widehat{a}$.

\smallbreak

In \cite{DL1} a new basis, $\Lambda$,  of $\mathcal{S}({\rm ST})$ is presented (see Theorem~\ref{newbasis} in this paper). For an illustration see Figure~\ref{basel}. The formulations of Eq.~(\ref{eq}) become particularly simple if one considers elements in the basis $\Lambda$, since braid band moves can be naturally described by elements in that basic set (see Figure~\ref{bbm12}).

\smallbreak

In this paper we present all topological steps needed in order to relate $\mathcal{S}(L(p,1))$ to $\mathcal{S}({\rm ST})$. We also present an augmented set $L\supset \Lambda$ which plays a crucial role on this paper.

\smallbreak

Our strategy is based on the following steps:

\begin{itemize}
\item[$\bullet$] By linearity, Equations~\ref{eq} boil down to considering only words in the canonical basis of the algebra $H_{1,n}(q)$, $\Sigma^{\prime}_{n}$. 
\item[$\bullet$] For words in $\Sigma^{\prime}_{n}$ we have to solve the equations $X_{\widehat{\alpha^{\prime}}}=X_{\widehat{sl_{\pm 1}(\alpha^{\prime})}}$, where $\widehat{sl_{\pm 1}(\alpha^{\prime})}$ is the result of the performance of a braid band move on the first moving strand of the closed braid $\widehat{\alpha^{\prime}}$, and $\alpha^{\prime} \in \Sigma^{\prime}_{n}$.
\item[$\bullet$] We then express elements in $\Sigma^{\prime}_{n}$ to elements in the linear bases of ${\rm H}_{1,n}(q)$, $\Sigma_n$, and show that the equations described in step 2 are equivalent to equations of the form $X_{\widehat{\alpha}}=X_{\widehat{sl_{\pm 1}(\alpha)}}$, where $\alpha \in \Sigma_n$.
\item[$\bullet$] Starting now from elements in $\Sigma_{n}$ we reduce the equations described in step 3 to equations obtained from elements in the ${\rm H}_{1,n}(q)$-module $L$, where the braid band moves are performed on any moving strand. Namely, $X_{\widehat{\beta}}=X_{\widehat{sl_{\pm i}(\beta)}}$, where $\widehat{sl_{\pm i}(\beta)}$ is the result of the performance of a braid band move on the $i^{th}$ moving strand of the closed braid $\widehat{\beta}$, and $\beta$ an element in the augmented set $L$ followed by a ``braiding tail''.
\item[$\bullet$] Then, we reduce the equations obtained from elements in the ${\rm H}_{1,n}(q)$-module $L$ by performing braid band moves on any strand, to equations obtained from elements in the ${\rm H}_{1,n}(q)$-module $\Lambda$ (\cite{DL1}) by performing braid band moves on any strand.
\item[$\bullet$] We eliminate now the ``braiding tails'' from elements in the ${\rm H}_{1,n}(q)$-module $\Lambda$ and reduce the computations to the basis of $\mathcal{S}({\rm ST})$, $\Lambda$, which is better adopted to band moves. 
\item[$\bullet$] Finally, the computation of the Homflypt skein module of the lens spaces $L(p,1)$, reduces to solving the infinite system of equations obtained from elements in the basis of $\mathcal{S}({\rm ST})$, $\Lambda$, by performing braid band moves on every moving strand.
\end{itemize}

\bigbreak

In \cite{DL3} we deal with the solution of this infinite system and compute $\mathcal{S}\left(L(p,1) \right)$. The paper is organized as follows: In Section~1 we recall the algebraic setting and results needed from \cite{La2, LR2, DL1}. We present the generalized Iwahori-Hecke algebra of type B, which is related to the knot theory of the solid torus and which plays a crucial role for this paper. We discuss its properties and present the Homflypt skein module of the solid torus (via braids). In Section~2 we start from diagrams in ST and show that in order to compute $\mathcal{S}(L(p,1))$, it suffices to consider elements in the linear bases of the algebras ${\rm H}_{1,n}(q)$, $\Sigma_n$. Finally, in Section~3 we reduce the computations only on elements in the basis $\Lambda$ of $\mathcal{S}({\rm ST})$and we arrive at the infinite system of equations, the solution of which is equivalent to computing $\mathcal{S}(L(p,1))$.

\bigbreak

In \cite{GM} the Homflypt skein module of the lens spaces $L(p,1)$ is computed using diagrammatic method. The diagrammatic method could in theory be generalized to the case of $L(p,q), q > 1$, but the diagrams become even more cumbersome to analyze and several induction arguments fail. The importance of our approach is that it can shed light to the problem of computing skein modules of arbitrary c.c.o. $3$-manifolds, since any $3$-manifold can be obtained by surgery on $S^3$ along unknotted closed curves, and since braid band moves are much more controlled than the ones in the diagrammatic setting. Indeed, one can use the results presented here in order to apply a braid approach to the skein module of an arbitrary c.c.o. $3$-manifold. The advantage of the braid approach is that it gives more control over the band moves than the diagrammatic approach and much of the diagrammatic complexity is absorbed into the proofs of the algebraic statements. The main difficulty of the problem lies in selecting from the infinitum of band moves (or handle slide moves) some basic ones, solving the infinite system of equations and proving that there are no dependencies in the solutions.

\section{Topological and algebraic tools}

\subsection{Mixed links and isotopy in $L(p,1)$}

We consider ST to be the complement of a solid torus in $S^3$. An oriented link $L$ in ST can be represented by an oriented \textit{mixed link} in $S^{3}$, that is a link in $S^{3}$ consisting of the unknotted fixed part $\widehat{I}$ representing the complementary solid torus in $S^3$ and the moving part $L$ that links with $\widehat{I}$. A \textit{mixed link diagram }is a diagram $\widehat{I}\cup \widetilde{L}$ of $\widehat{I}\cup L$ on the plane of $\widehat{I}$, where this plane is equipped with the top-to-bottom direction of $I$.

\begin{figure}
\begin{center}
\includegraphics[width=1.1in]{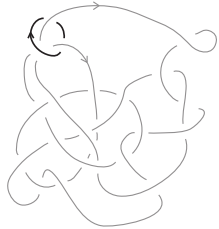}
\end{center}
\caption{A mixed link in $S^3$.}
\label{mlink}
\end{figure}

\smallbreak

It is well known that the lens spaces $L(p,1)$ can be obtained from $S^3$ by surgery on the unknot with surgery coefficient $p$. Surgery along the unknot can be realized by considering first the complementary solid torus and then attaching to it a solid torus according to some homeomorphism on the boundary. Thus, isotopy in $L(p,1)$ can be viewed as isotopy in ST together with the band moves in $S^3$, which reflects the surgery description of the manifold (see Figure~\ref{bmov}). In \cite{DL2} we show that in order to describe isotopy for knots and links in a c.c.o. $3$-manifold, it suffices to consider only the type $a$ band moves (for an illustration see Fig.~\ref{bmov}) and thus, isotopy between oriented links in $L(p,1)$ is reflected in $S^3$ by means of the following result (cf. Thm.~5.8 \cite{LR1}, Thm.~6 \cite{DL2} ):

\smallbreak

{\it
Two oriented links in $L(p,1)$ are isotopic if and only if two corresponding mixed link diagrams of theirs differ by isotopy in {\rm ST} together with a finite sequence of the type $a$ band moves.
}

\begin{figure}
\begin{center}
\includegraphics[width=4.5in]{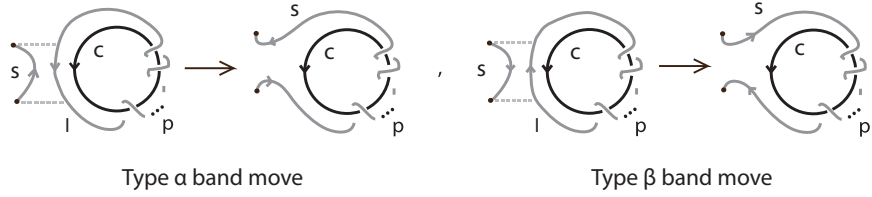}
\end{center}
\caption{The two types of band moves.}
\label{bmov}
\end{figure}

\subsection{Mixed braids and braid equivalence for knots and links in $L(p,1)$}

By the Alexander theorem for knots in solid torus (cf. Thm.~1 \cite{La4}), a mixed link diagram $\widehat{I}\cup \widetilde{L}$ of $\widehat{I}\cup L$ may be turned into a \textit{mixed braid} $I\cup \beta $ with isotopic closure. This is a braid in $S^{3}$ where, without loss of generality, its first strand represents $\widehat{I}$, the fixed part, and the other strands, $\beta$, represent the moving part $L$. The subbraid $\beta$ is called the \textit{moving part} of $I\cup \beta $ (see Fig.~\ref{mbtoml}).

\begin{figure}
\begin{center}
\includegraphics[width=2in]{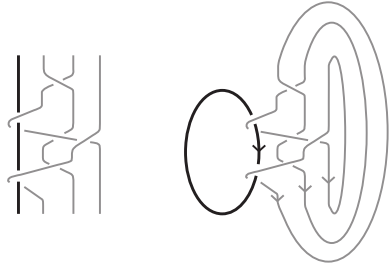}
\end{center}
\caption{The closure of a mixed braid to a mixed link.}
\label{mbtoml}
\end{figure}

Then, in order to translate isotopy for links in $L(p,1)$ into braid equivalence, we first perform the technique of {\it standard parting} introduced in \cite{LR2}: Parting a geometric mixed braid means to separate the moving strands from the fixed strand that represents the lens spaces $L(p,1)$. This can be realized by pulling each pair of corresponding moving strands to the right and {\it over\/} or {\it under\/} the fixed strand that lies on their right. Then, we define a {\it braid band move} to be a move between mixed braids, which is a band move between their closures. It starts with a little band oriented downward, which, before sliding along a surgery strand, gets one twist {\it positive\/} or {\it negative\/} (see Figure~\ref{bbmfig}).

\begin{figure}
\begin{center}
\includegraphics[width=1.9in]{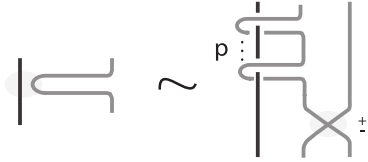}
\end{center}
\caption{The two types of braid band moves.}
\label{bbmfig}
\end{figure}

\smallbreak

The sets of braids related to the ST form groups, which are in fact the Artin braid groups type B, denoted $B_{1,n}$, with presentation:

\[ B_{1,n} = \left< \begin{array}{ll}  \begin{array}{l} t, \sigma_{1}, \ldots ,\sigma_{n-1}  \\ \end{array} & \left| \begin{array}{l}
\sigma_{1}t\sigma_{1}t=t\sigma_{1}t\sigma_{1} \ \   \\
 t\sigma_{i}=\sigma_{i}t, \quad{i>1}  \\
{\sigma_i}\sigma_{i+1}{\sigma_i}=\sigma_{i+1}{\sigma_i}\sigma_{i+1}, \quad{ 1 \leq i \leq n-2}   \\
 {\sigma_i}{\sigma_j}={\sigma_j}{\sigma_i}, \quad{|i-j|>1}  \\
\end{array} \right.  \end{array} \right>, \]

\noindent where the generators $\sigma _{i}$ and $t$ are illustrated in Figure~\ref{gen}.

\begin{figure}
\begin{center}
\includegraphics[width=2.3in]{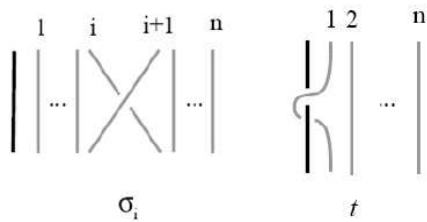}
\end{center}
\caption{The generators of $B_{1,n}$.}
\label{gen}
\end{figure}

In \cite{LR1} the authors give a sharpened version of the classical Markov's theorem based only on one type of moves, the $L$-moves: An {\it $L$-move} on a mixed braid $B \bigcup \beta$, consists in cutting an arc of the moving subbraid open and pulling the upper cutpoint downward and
the lower upward, so as to create a new pair of braid strands with corresponding endpoints, and such that both strands cross entirely {\it over} or {\it under} with the rest of the braid. Stretching the new strands over will give rise to an {\it $L_o$-move\/} and under to an {\it $L_u$-move\/}. For an illustration see Figure~\ref{lmove}.

\begin{figure}
\begin{center}
\includegraphics[width=4.4in]{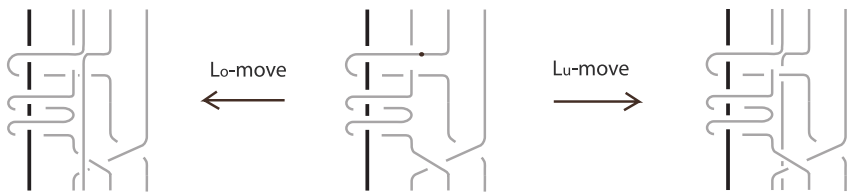}
\end{center}
\caption{A mixed braid and the two types of $L$-moves}
\label{lmove}
\end{figure}

\bigbreak

Isotopy in $L(p,1)$ then is translated on the level of mixed braids by means of the following theorem:

\begin{thm}[Theorem~5, \cite{LR2}] \label{markov}
 Let $L_{1} ,L_{2}$ be two oriented links in $L(p,1)$ and let $I\cup \beta_{1} ,{\rm \; }I\cup \beta_{2}$ be two corresponding mixed braids in $S^{3}$. Then $L_{1}$ is isotopic to $L_{2}$ in {\rm ST} if and only if $I\cup \beta_{1}$ is equivalent to $I\cup \beta_{2}$ in $\mathop{\cup }\limits_{n=1}^{\infty } B_{1,n}$ by the following moves:
\[ \begin{array}{clll}
(i)  & Conjugation:         & \alpha \sim \beta^{-1} \alpha \beta, & {\rm if}\ \alpha ,\beta \in B_{1,n}. \\
(ii) & Stabilization\ moves: &  \alpha \sim \alpha \sigma_{n}^{\pm 1} \in B_{1,n+1}, & {\rm if}\ \alpha \in B_{1,n}. \\
(iii) & Loop\ conjugation: & \alpha \sim t^{\pm 1} \alpha t^{\mp 1}, & {\rm if}\ \alpha \in B_{1,n}. \\
(iv) & Braid\ band\ moves: & \alpha \sim {t^{\prime}_n}^p \sigma_n^{\pm 1}\alpha^{\prime}, & a^{\prime}\in B_{1, n+1},
\end{array} \]

\noindent {\rm where} $a^{\prime}$ {\rm is the word }  $a$  {\rm with the substitutions:}

$$t^{\pm 1} \longleftrightarrow (\sigma_1^{-1} \ldots \sigma_{n-1}^{-1} \cdot  \sigma_n^2\cdot \sigma_{n-1}\ldots \sigma_{1}t)^{\pm 1} $$
\noindent Equivalently, by the same moves as above, where (i) and (ii) are replaced by the two types of $L$-moves.
\end{thm}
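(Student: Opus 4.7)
The plan is to reduce to two ingredients that are already available in the literature and recalled in the excerpt: the topological isotopy theorem stated just before the statement (isotopy in $L(p,1)$ equals isotopy in ST together with type $a$ band moves), and the Markov-type braid theorem for the solid torus from \cite{La4, LR1} (ST isotopy at the braid level is generated by conjugation, stabilization, and loop conjugation, equivalently by $L$-moves). Moves (i)--(iii) in the statement come straight from the second ingredient; the content of move (iv) is precisely the braid-level incarnation of a type $a$ band move.

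For the ``only if'' direction, I would verify move-by-move that each of (i)--(iv) induces isotopic closures in $L(p,1)$. Moves (i), (ii), (iii) already preserve isotopy in ST, hence a fortiori in $L(p,1)$. For move (iv), I would check directly that the closure of ${t'_n}^p\sigma_n^{\pm 1}\alpha'$ is obtained from the closure of $\alpha$ by a type $a$ band move along the fixed strand $\widehat{I}$: the prefix ${t'_n}^p\sigma_n^{\pm 1}$ realises the band winding $p$ times around $\widehat{I}$ together with the prescribed positive or negative twist, while the substitution $t\mapsto \sigma_1^{-1}\cdots\sigma_{n-1}^{-1}\sigma_n^2\sigma_{n-1}\cdots\sigma_1 t$ records the fact that the $(n+1)$-st strand created by the band runs parallel to $\widehat{I}$, so every loop in $\alpha$ that originally encircled $\widehat{I}$ must, after the band move, encircle both strands; the inner $\sigma_n^2$ is exactly the parted-braid expression for a loop around the newly created rightmost strand.

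For the ``if'' direction, suppose the closures $\widehat{I\cup\beta_1}$ and $\widehat{I\cup\beta_2}$ are isotopic in $L(p,1)$. By the topological theorem, one passes from a mixed diagram of the first to one of the second by a finite sequence of ST-ambient isotopies $\mathcal{I}_0,\mathcal{I}_1,\ldots,\mathcal{I}_k$ separated by type $a$ band moves $b_1,\ldots,b_k$. Each $\mathcal{I}_j$ lifts, after braiding via the mixed-link Alexander theorem of \cite{La4} and the standard parting of \cite{LR2}, to a sequence of moves (i)--(iii) (or equivalently $L$-moves) on parted mixed braids. For each band move $b_j$ I would apply the parting procedure to both sides of the move and show that the resulting change on parted mixed braids is exactly a move (iv), up to further moves of type (i)--(iii). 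The equivalent formulation with $L$-moves then follows by invoking the $L$-move Markov theorem of \cite{LR1} to replace conjugation and stabilization.

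The main obstacle is this last step: one must show that a band move performed on \emph{any} arc of the moving sublink, in an arbitrary position with respect to the fixed strand, can always be brought, after parting and up to moves (i)--(iii), into the standard form (iv) applied at a specific strand (say the last one), \emph{and} that the combinatorial bookkeeping produces precisely the substitution $t\leftrightarrow \sigma_1^{-1}\cdots\sigma_{n-1}^{-1}\sigma_n^2\sigma_{n-1}\cdots\sigma_1 t$ for every occurrence of $t^{\pm 1}$ in $\alpha$. This requires a careful local analysis of how a newly introduced parallel copy of $\widehat{I}$ interacts with each loop generator $t$ in $\alpha$ during parting, and it is where most of the work of the proof must be concentrated.
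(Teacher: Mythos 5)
This theorem is not proved in the paper at all: it is recalled verbatim from \cite{LR2} (Theorem~5 there), so there is no in-paper argument to compare against. Your outline does reproduce the strategy of the original proof: first the geometric Markov theorem for $L(p,1)$ from \cite{LR1} (isotopy equals ST-isotopy plus type $a$ band moves, with the ST-isotopy part handled by $L$-moves, equivalently conjugation and stabilization), and then the algebraization of \cite{LR2} via standard parting and combing, which is where loop conjugation (iii) and the algebraic form (iv) of the band move arise. The easy direction (each of (i)--(iv) preserves the isotopy class of the closure in $L(p,1)$) is also handled correctly.

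As a proof, however, your text is incomplete, and you say so yourself. The entire content of the hard direction is the claim that an arbitrary type $a$ band move, performed on any arc of the moving part and in any position relative to $\widehat{I}$, can be brought by ST-isotopy to a standard position, and that parting and combing then convert it into move (iv) modulo moves (i)--(iii), with the substitution $t^{\pm 1}\leftrightarrow(\sigma_1^{-1}\cdots\sigma_{n-1}^{-1}\sigma_n^2\sigma_{n-1}\cdots\sigma_1 t)^{\pm 1}$ recording how each loop generator interacts with the new strand parallel to the surgery strand. You correctly identify this as ``the main obstacle'' and then stop; but this is precisely the substance of the theorem, and in \cite{LR2} it occupies the parting and combing lemmas (the $\sigma_n^2$ factor is exactly the double crossing of a combed loop with the newly created rightmost strand). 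Without carrying out that local analysis, or at least explicitly invoking the relevant lemmas of \cite{LR2}, the argument does not close. A minor further point: the statement as printed reads ``isotopic in ST'', which must be read as ``isotopic in $L(p,1)$'' for move (iv) to play any role; your reading is the intended one.
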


In the statement of Theorem~\ref{markov} the braid band moves take place on the last strand of a mixed braid. Clearly, this is equivalent to performing the braid band moves on the first moving strand (see Figure~\ref{bbmconj}) or, in fact, on any specified moving strand of the mixed braid. Indeed we have the following:

\begin{lemma}\label{bbm1}
A braid band move may always be assumed to be performed on the first moving strand of a mixed braid. Moreover, such a braid band move 
can be expressed algebraically by the following relation 
 
$$\alpha \sim t^p \sigma_1^{\pm 1}\alpha_+,$$

\noindent where $\alpha_+$ is $\alpha$ with all indices shifted by $1$.
\end{lemma}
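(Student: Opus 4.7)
The plan is to split the proof into a geometric reduction to the first moving strand and a direct algebraic reading of what that bbm looks like, then to cross-check the result against the last-strand formulation in Theorem~\ref{markov}(iv).

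First I would argue geometrically that the choice of moving strand is immaterial. A braid band move is a local operation (Figure~\ref{bbmfig}): it inserts a band that acquires one $\pm$ twist and then slides $p$ times around the fixed surgery strand. If the bbm is initially performed on the $i$-th moving strand, conjugating the resulting mixed braid in $B_{1,n+1}$ by the cyclic shift $\gamma_i = \sigma_{i-1}\sigma_{i-2} \cdots \sigma_1$ (empty when $i=1$) slides the bbm over to the first moving strand while preserving the isotopy class at the closure level by Theorem~\ref{markov}(i). The picture for this conjugation is recorded in Figure~\ref{bbmconj}.

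Next I would read off the algebraic expression for a bbm on the first moving strand. Starting with $\alpha \in B_{1,n}$, the bbm inserts a new strand at position $1$ of a mixed braid in $B_{1,n+1}$. This new strand first wraps $p$ times around the fixed strand (contributing the factor $t^p$), then crosses the former first moving strand with a single $\pm$ twist (contributing $\sigma_1^{\pm 1}$), and finally runs alongside the rest of the diagram. Since the $n$ original moving strands now occupy positions $2,\ldots,n+1$, every generator $\sigma_i$ appearing in $\alpha$ is replaced by $\sigma_{i+1}$; this is exactly the shift $\alpha\mapsto\alpha_+$. The resulting word is $t^p\sigma_1^{\pm 1}\alpha_+$, which is the stated relation.

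For consistency I would then check that $t^p\sigma_1^{\pm 1}\alpha_+$ is Markov-equivalent to the last-strand formula $(t_n')^p\sigma_n^{\pm 1}\alpha'$ of Theorem~\ref{markov}(iv). Conjugating by $\gamma = \sigma_1\sigma_2\cdots\sigma_n$, which implements the cyclic shift $i\mapsto i+1$ of the moving strands, and applying the defining relations of $B_{1,n+1}$ — most notably the mixed braid relation $\sigma_1 t\sigma_1 t = t\sigma_1 t\sigma_1$ together with the far-commutativity $t\sigma_i=\sigma_i t$ for $i>1$ and the usual braid relations — should send $\sigma_1\mapsto \sigma_n$ and $t\mapsto \sigma_1^{-1}\cdots\sigma_{n-1}^{-1}\sigma_n^{2}\sigma_{n-1}\cdots\sigma_1 t = t_n'$, while carrying $\alpha_+$ to $\alpha'$. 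The main obstacle is precisely this explicit conjugation computation: unpacking how the loop generator evolves under successive conjugations by the $\sigma_i$ requires careful bookkeeping with the type B braid relations. Once the formulas for $\gamma\,\sigma_1\,\gamma^{-1}$ and $\gamma\, t\,\gamma^{-1}$ have been verified, the two forms of the relation are identified and the lemma follows.
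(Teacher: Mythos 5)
Your geometric reduction to the first moving strand is essentially the paper's own argument: the paper conjugates $\beta$ by $\sigma_{i-1}\cdots\sigma_1$ \emph{before} sliding, so that $sl_{\pm i}(\beta)=sl_{\pm 1}(\alpha)$ for $\alpha=(\sigma_1^{-1}\cdots\sigma_{i-1}^{-1})\,\beta\,(\sigma_{i-1}\cdots\sigma_1)$, while you conjugate the result afterwards; these are two readings of the same commutative square and both are acceptable. The paper then simply declares the algebraic formula ``clear from Figure~\ref{bbm12}'', so your attempt to read it off explicitly is in the right spirit.

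There is, however, one concrete error in that reading. You identify the shift $\alpha\mapsto\alpha_+$ with the substitution $\sigma_i\mapsto\sigma_{i+1}$ alone. That is not enough: every occurrence of the loop generator $t$ in $\alpha$ must also be rewritten. After the new strand is created in position $1$, the old first moving strand sits in position $2$, and each time it encircles the surgery strand it must now also cross the newly created strand twice; in the notation of Eq.~(\ref{lgen}) this is exactly $t\mapsto t_1=\sigma_1t\sigma_1$, and more generally $t_j\mapsto t_{j+1}$ --- which is what ``all indices shifted by $1$'' means once one sets $t=t_0$. If $t$ were left fixed, the stated relation would already fail for $\alpha=t^{k}$, whereas the paper's subsequent computations (e.g.\ $t^{k}\rightarrow t^{p}t_1^{k}\sigma_1^{\pm1}$ in Lemma~\ref{gap} and $T_i\rightarrow t^{p}T_{i+}$ in Proposition~\ref{picprop2}) depend on $t\mapsto t_1$. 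The same oversight surfaces in your consistency check: the substitution $t\mapsto\sigma_1^{-1}\cdots\sigma_{n-1}^{-1}\sigma_n^{2}\sigma_{n-1}\cdots\sigma_1t$ in Theorem~\ref{markov}(iv) is precisely the last-strand counterpart of this rewriting of $t$ (and, as a word in $B_{1,n+1}$, it is not equal to $t'_n$), so the conjugation computation you defer is exactly where this bookkeeping would have forced itself on you. Correcting the definition of $\alpha_+$ repairs the argument; the consistency check itself is optional, since the equivalence with the last-strand formulation is already what the geometric conjugation argument establishes.
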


\begin{proof}
In a mixed braid $I\cup \beta$ consider the last strand of $\beta$ approaching the surgery strand $I$ from the right. Before performing a bbm we apply conjugation (isotopy in ST) and obtain an equivalent mixed braid where the first strand is now approaching $I$ (see Figure~\ref{bbmconj}). In terms of diagrams we have the following:

\[
\begin{array}{ccccccc}
\beta & \sim & (\sigma_{i-1}\ldots \sigma_1 \sigma_1^{-1}\ldots \sigma_{i-1}^{-1})\cdot \beta {\sim} \ \underset{\alpha}{\underbrace{(\sigma_1^{-1}\ldots \sigma_{i-1}^{-1})\cdot \beta \cdot (\sigma_{i-1}\ldots \sigma_1)}} & = &  \alpha\\
\downarrow &  &  &   & \downarrow\\
sl_{\pm i}(\beta) &  & = &  & sl_{\pm 1}(\alpha)
\end{array}
\]

The proof of the second statement of the lemma is clear by viewing Figure~\ref{bbm12}.

\begin{figure}
\begin{center}
\includegraphics[width=4in]{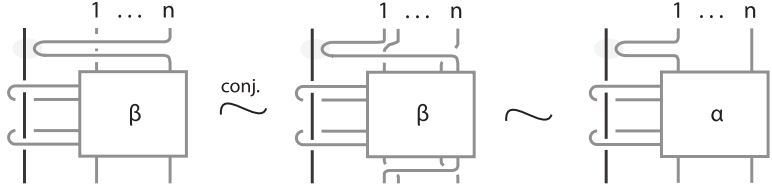}
\end{center}
\caption{ Proof of Lemma~\ref{bbm1}. }
\label{bbmconj}
\end{figure}

\begin{figure}
\begin{center}
\includegraphics[width=2.5in]{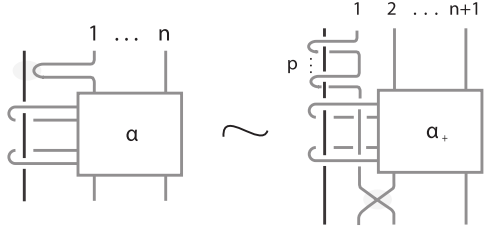}
\end{center}
\caption{Braid band move performed on the first moving strand.}
\label{bbm12}
\end{figure}

\end{proof}

\begin{nt}
We denote a braid band move by bbm and, specifically, the result of a positive or negative braid band move performed on the $i^{th}$-moving strand of a mixed braid $\beta$ by $sl_{\pm i}(\beta)$.
\end{nt}

\subsection{The generalized Iwahori-Hecke algebra of type B}

It is well known that $B_{1,n}$ is the Artin group of the Coxeter group of type B, which is related to the Hecke algebra of type B, $\textrm{H}_{n}{(q,Q)}$ and to the cyclotomic Hecke algebras of type B. In \cite{La3} it has been established that all these algebras form a tower of B-type algebras and are related to the knot theory of ST. The basic one is $\textrm{H}_{n}{(q,Q)}$, a presentation of which is obtained from the presentation of the Artin group $B_{1,n}$ by adding the quadratic relations
\begin{equation}\label{quad}
{g_{i}^2=(q-1)g_{i}+q}
\end{equation}

\noindent and the relation $t^{2} =\left(Q-1\right)t+Q$, where $q,Q \in {\mathbb C}\backslash \{0\}$ are seen as fixed variables. The middle B--type algebras are the cyclotomic Hecke algebras of type B, $\textrm{H}_{n}(q,d)$, whose presentations are obtained by the quadratic relation~(\ref{quad}) and $t^d=(t-u_{1})(t-u_{2}) \ldots (t-u_{d})$. The topmost Hecke-like algebra in the tower is the \textit{generalized Iwahori--Hecke algebra of type B}, $\textrm{H}_{1,n}(q)$, which, as observed by T. tom Dieck, is closely related to the affine Hecke algebra of type A, $\widetilde{\textrm{H}}_n(q)$ (cf. \cite{La3}). The algebra $\textrm{H}_{1,n}(q)$ has the following presentation:
\[
\textrm{H}_{1,n}{(q)} = \left< \begin{array}{ll}  \begin{array}{l} t, g_{1}, \ldots ,g_{n-1}  \\ \end{array} & \left| \begin{array}{l} g_{1}tg_{1}t=tg_{1}tg_{1} \ \
\\
 tg_{i}=g_{i}t, \quad{i>1}  \\
{g_i}g_{i+1}{g_i}=g_{i+1}{g_i}g_{i+1}, \quad{1 \leq i \leq n-2}   \\
 {g_i}{g_j}={g_j}{g_i}, \quad{|i-j|>1}  \\
 {g_i}^2=(q-1)g_{i}+q, \quad{i=1,\ldots,n-1}
\end{array} \right.  \end{array} \right>.
\]
\noindent That is:
\begin{equation*}
\textrm{H}_{1,n}(q)= \frac{{\mathbb Z}\left[q^{\pm 1} \right]B_{1,n}}{ \langle \sigma_i^2 -\left(q-1\right)\sigma_i-q \rangle}.
\end{equation*}

Note that in $\textrm{H}_{1,n}(q)$ the generator $t$ satisfies no polynomial relation, making the algebra $\textrm{H}_{1,n}(q)$ infinite dimensional. Also that in \cite{La3} the algebra $\textrm{H}_{1,n}(q)$ is denoted as $\textrm{H}_{n}(q, \infty)$.

\smallbreak

In \cite{Jo} V.F.R. Jones gives the following linear basis for the Iwahori-Hecke algebra of type A, $\textrm{H}_{n}(q)$:

$$ S =\left\{(g_{i_{1} }g_{i_{1}-1}\ldots g_{i_{1}-k_{1}})(g_{i_{2} }g_{i_{2}-1 }\ldots g_{i_{2}-k_{2}})\ldots (g_{i_{p} }g_{i_{p}-1 }\ldots g_{i_{p}-k_{p}})\right\}, $$

\noindent for $1\le i_{1}<\ldots <i_{p} \le n-1{\rm \; }$.

\noindent The basis $S$ yields directly an inductive basis for $\textrm{H}_{n}(q)$, which is used in the construction of the Ocneanu trace, leading to the Homflypt or $2$-variable Jones polynomial.

In $\textrm{H}_{1,n}(q)$ we define the elements:
\begin{equation}\label{lgen}
t_{i}:=g_{i}g_{i-1}\ldots g_{1}tg_{1} \ldots g_{i-1}g_{i}\ \rm{and}\ t^{\prime}_{i}:=g_{i}g_{i-1}\ldots g_{1}tg_{1}^{-1}\ldots g_{i-1}^{-1}g_{i}^{-1},
\end{equation}
as illustrated in Figure~\ref{genh}.

\smallbreak

In \cite{La3} the following result has been proved.

\begin{thm}[Proposition~1, Theorem~1 \cite{La3}] \label{basesH}
The following sets form linear bases for ${\rm H}_{1,n}(q)$:
\[
\begin{array}{llll}
 (i) & \Sigma_{n} & = & \{t_{i_{1} } ^{k_{1} } t_{i_{2} } ^{k_{2} } \ldots t_{i_{r}}^{k_{r} } \cdot \sigma \} ,\ {\rm where}\ 1\le i_{1} <\ldots <i_{r} \le n-1,\\
 (ii) & \Sigma^{\prime} _{n} & = & \{ {t^{\prime}_{i_1}}^{k_{1}} {t^{\prime}_{i_2}}^{k_{2}} \ldots {t^{\prime}_{i_r}}^{k_{r}} \cdot \sigma \} ,\ {\rm where}\ 1\le i_{1} < \ldots <i_{r} \le n, \\
\end{array}
\]
\noindent where $k_{1}, \ldots ,k_{r} \in {\mathbb Z}$ and $\sigma$ a basic element in $\textrm{H}_{n}(q)$.
\end{thm}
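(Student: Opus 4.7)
The plan is to show separately that $\Sigma_n$ spans ${\rm H}_{1,n}(q)$ as a $\mathbb{Z}[q^{\pm 1}]$-module and that its elements are linearly independent; the corresponding statements for $\Sigma_n^{\prime}$ then follow by expressing each $t_i^{\prime}$ in terms of the $t_j$'s via $t_i^{\prime} = g_i\cdots g_1 t g_1^{-1}\cdots g_i^{-1}$ together with the inversion rule $g_j^{-1} = q^{-1}g_j - (1 - q^{-1})$ that is a consequence of the quadratic relation. In this way the two assertions are logically equivalent and I would focus on $\Sigma_n$.

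For the spanning step I would run a straightening algorithm. An arbitrary monomial in the generators $t, g_1, \ldots, g_{n-1}$ is processed by induction on the number of occurrences of $t$, exploiting the fact that the only non-trivial interaction of $t$ with a $g_i$ is with $g_1$, via the mixed braid relation $g_1 t g_1 t = t g_1 t g_1$, while $t$ commutes with every $g_i$ for $i > 1$. Each occurrence of $t^{\pm 1}$ can thus be absorbed into a factor of the form $g_i g_{i-1}\cdots g_1 t^{\pm 1} g_1\cdots g_{i-1} g_i = t_i^{\pm 1}$; after all such absorptions the trailing tail lies in the Hecke subalgebra ${\rm H}_n(q)$ and can be replaced by a linear combination from Jones's basis $S$.

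Rearranging the resulting loop factors into the prescribed index order $i_1 < \cdots < i_r$ requires the key auxiliary lemma
\[
t_i t_j = t_j t_i \quad \text{for all } i, j,
\]
which is the analogue of the commutativity of the Bernstein (equivalently Jucys--Murphy) elements in the affine Hecke algebra of type A; as remarked in the paper, ${\rm H}_{1,n}(q)$ is closely related to $\widetilde{{\rm H}}_n(q)$. Verifying this commutativity directly from the defining relations is the main technical step: I would induct on $|i-j|$ and on $\min(i,j)$, combining the mixed braid relation involving $t$ with the far-commutation $tg_i = g_ it$ for $i>1$ and the ordinary type-A braid relations among the $g_i$.

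For linear independence I would exploit a faithful model rather than a dimension count (the algebra is infinite-dimensional in the direction of $t$). Two natural options present themselves: the topological realisation via mixed braids in the solid torus, in which distinct monomials in $\Sigma_n$ produce visibly inequivalent looping patterns of the moving strands around the fixed strand; or the identification of ${\rm H}_{1,n}(q)$ with the appropriate quotient of $\widetilde{{\rm H}}_n(q)$, whose Bernstein--Lusztig ``polynomial part times Hecke part'' elements are already known to be a basis. The principal obstacle throughout is the commutativity of the $t_i$'s; once it is in hand, the spanning, the passage between $\Sigma_n$ and $\Sigma_n^{\prime}$, and the linear independence all follow from standard arguments.
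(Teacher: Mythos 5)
This theorem is quoted from \cite{La3} (Proposition~1 and Theorem~1 there) and is not proved in the present paper, so there is no in-text proof to compare against; your proposal has to be judged against the standard argument of \cite{La3}. Your architecture for the spanning half is essentially the right one: straighten an arbitrary word by pushing each occurrence of $t$ into a looping element using $g_1tg_1t=tg_1tg_1$ and $tg_i=g_it$ for $i>1$, reorder using the commutativity of the $t_i$'s (which is true, is used throughout the paper, and reduces via the telescoping identity $t_i=g_i\cdots g_{j+1}\,t_j\,g_{j+1}\cdots g_i$ to the shifted four-term relation $t_jg_{j+1}t_jg_{j+1}=g_{j+1}t_jg_{j+1}t_j$), and absorb the residual braiding into Jones's basis of ${\rm H}_n(q)$. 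Two corrections, though. First, $g_i\cdots g_1t^{-1}g_1\cdots g_i$ is \emph{not} $t_i^{-1}$; the inverse is $g_i^{-1}\cdots g_1^{-1}t^{-1}g_1^{-1}\cdots g_i^{-1}$, so negative powers of the looping generators need a separate reduction via $g^{-1}=q^{-1}g-(1-q^{-1})$, and this is precisely where most of the bookkeeping lives. Second, the passage between $\Sigma_n$ and $\Sigma^{\prime}_n$ is not a formal equivalence: since $t_i^{\prime}=t_i\,(g_i\cdots g_1g_1\cdots g_i)^{-1}$, the braiding factors interleave with the remaining looping factors when a monomial in the $t_i^{\prime}$'s is expanded, so one obtains a triangular change of basis with lower-order correction terms and needs an ordering argument (of the kind formalized in Definition~\ref{order} and in \cite{DL1}) to transfer the basis property from one set to the other.

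The genuine gap is in linear independence. ``Visibly inequivalent looping patterns'' of mixed braids proves nothing here: ${\rm H}_{1,n}(q)$ is a quotient of the group algebra of $B_{1,n}$ by the quadratic relations, and non-isotopic braids routinely become linearly dependent in that quotient (already $g_i^2$, $g_i$ and $1$ are dependent). You need either a faithful linear realisation or an abstract free-module construction. Your second suggestion --- identifying ${\rm H}_{1,n}(q)$ with the affine Hecke algebra $\widetilde{{\rm H}}_n(q)$ and importing the Bernstein--Lusztig basis, under which the $t_i^{\prime}$ correspond to the commuting Bernstein generators --- does work, but that isomorphism is itself a claim to be proved (it is the content of tom~Dieck's observation cited in the paper), and it establishes $\Sigma^{\prime}_n$ first and $\Sigma_n$ only afterwards, the reverse of your stated plan. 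The route of \cite{La3} is instead inductive, in the spirit of Jones's treatment of ${\rm H}_n(q)$: one builds the module freely spanned by the proposed basis, defines the action of the generators on it explicitly, and verifies the defining relations, which yields linear independence with no appeal to pictures or to the affine Hecke algebra. Either way, this half of the theorem requires an actual argument; as written, your proposal does not supply one.
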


\begin{figure}
\begin{center}
\includegraphics[width=3.2in]{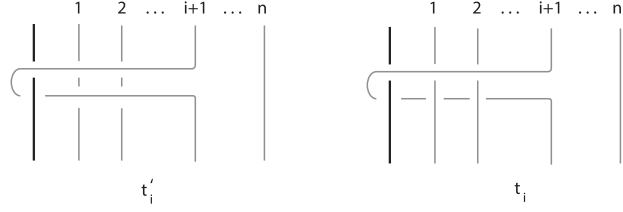}
\end{center}
\caption{The elements $t^{\prime}_{i}$ and $t_{i}$.}
\label{genh}
\end{figure}

\begin{remark}\label{conind}\rm
The indices of the $t^{\prime}_i$'s in the set $\Sigma^{\prime}_n$ are ordered but are not necessarily consecutive, neither do they
need to start from $t$.
\end{remark}

\subsection{The Homflypt skein module of ST}

In \cite{La3} the basis $\Sigma^{\prime}_{n}$ is used for constructing a Markov trace on $\bigcup _{n=1}^{\infty }\textrm{H}_{1,n}(q)$.

\begin{thm}[Theorem~6, \cite{La3}] \label{tr}
Given $z,s_{k}$, with $k\in {\mathbb Z}$ specified elements in $R={\mathbb Z}\left[q^{\pm 1} \right]$, there exists
a unique linear Markov trace function
\begin{equation*}
{\rm tr}:\bigcup _{n=1}^{\infty }{\rm H}_{1,n}(q)  \to R\left(z,s_{k} \right),k\in {\mathbb Z}
\end{equation*}

\noindent determined by the rules:
\[
\begin{array}{lllll}
(1) & {\rm tr}(ab) & = & {\rm tr}(ba) & \quad {\rm for}\ a,b \in {\rm H}_{1,n}(q) \\
(2) & {\rm tr}(1) & = & 1 & \quad {\rm for\ all}\ {\rm H}_{1,n}(q) \\
(3) & {\rm tr}(ag_{n}) & = & z{\rm tr}(a) & \quad {\rm for}\ a \in {\rm H}_{1,n}(q) \\
(4) & {\rm tr}(a{t^{\prime}_{n}}^{k}) & = & s_{k}{\rm tr}(a) & \quad {\rm for}\ a \in {\rm H}_{1,n}(q),\ k \in {\mathbb Z}. \\
\end{array}
\]
\end{thm}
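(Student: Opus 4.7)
The plan follows the Ocneanu--Jones paradigm for constructing Markov traces on Hecke-type towers, adapted to type B by Geck--Lambropoulou and extended here to the infinite-dimensional generalized algebra $\textrm{H}_{1,n}(q)$. I would treat uniqueness first, then existence, and anticipate that the main work lies in verifying conjugation invariance for the latter.

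For \emph{uniqueness}, I argue by induction on $n$. The base case $n=1$ is the Laurent polynomial ring $R[t^{\pm 1}]$; rule (2) fixes $\textrm{tr}(1)=1$ and rule (4) applied with $a=1$ gives $\textrm{tr}(t^{k})=s_{k}$, determining $\textrm{tr}$ completely. For the inductive step, I use the basis $\Sigma^{\prime}_n$ from Theorem~\ref{basesH}: any element of $\textrm{H}_{1,n}(q)$ is a linear combination of words ${t^{\prime}_{i_1}}^{k_1}\cdots {t^{\prime}_{i_r}}^{k_r}\sigma$ with $\sigma$ a Jones basis element of $H_n(q)$. If the largest index $i_r$ corresponds to strand $n$, the cyclic property (1) allows the $H_n$-tail $\sigma$ to be cycled past the rightmost loop factor, and rule (4) extracts the scalar $s_{k_r}$, reducing the computation to a trace on $\textrm{H}_{1,n-1}(q)$. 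Otherwise Jones's inductive factorization $\sigma=\sigma^{\prime}g_{n-1}g_{n-2}\cdots g_j$ with $\sigma^{\prime}\in H_{n-1}(q)$, together with rules (1) and (3), eliminates the generator $g_{n-1}$ at the cost of a factor $z$, again producing a trace on $\textrm{H}_{1,n-1}(q)$ that is determined by the inductive hypothesis.

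For \emph{existence}, I would take the reduction procedure above as a recursive definition of $\textrm{tr}$, using an inductive basis of $\textrm{H}_{1,n}(q)$ built from $\textrm{H}_{1,n-1}(q)$ by right multiplication with the coset representatives $\{1,g_{n-1},g_{n-1}g_{n-2},\ldots,g_{n-1}\cdots g_{1}\}$ and the loop factors $\{{t^{\prime}_n}^{k}\}_{k\in\mathbb{Z}}$. Rules (2), (3), and (4) then hold by construction. The nontrivial task is to verify the conjugation invariance (1); by linearity it suffices to check $\textrm{tr}(xw)=\textrm{tr}(wx)$ for each generator $x\in\{t,g_{1},\ldots,g_{n-1}\}$ and each inductive basis element $w$. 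Through the braid relations, the mixed relation $g_{1}tg_{1}t=tg_{1}tg_{1}$, and the quadratic relation for the $g_{i}$'s, each such identity reduces to an identity between traces of strictly lower rank, to which the inductive hypothesis applies.

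The \emph{main obstacle} is precisely this verification of conjugation invariance in the existence part, especially for the cases involving the infinite family of parameters $\{s_{k}\}_{k\in\mathbb{Z}}$. Because $t$ satisfies no polynomial relation in $\textrm{H}_{1,n}(q)$, the finiteness arguments available for cyclotomic quotients $\textrm{H}_{n}(q,d)$ do not apply, so the proof must proceed purely through the commutation properties of the loop elements $t^{\prime}_{i}$ with the braiding generators $g_{j}$ and a careful propagation of the exponents $k_{i}$ through these commutations. The relevant facts --- that $t^{\prime}_{j}$ commutes with $g_{i}$ for indices $i$ far from $j$ and interacts in a controlled, relation-determined way with the nearby $g_{i}$'s --- are exactly what is needed to make the recursion well-defined and to establish (1).
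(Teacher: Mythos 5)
The paper does not prove this statement: it is quoted verbatim from [La3, Theorem~6] and used as an imported tool, so there is no in-paper proof to compare against. Your plan --- uniqueness by induction on $n$ via the inductive basis and rules (1)--(4), existence by defining ${\rm tr}$ recursively on an inductive basis built from coset representatives and the loop factors ${t^{\prime}_n}^{k}$, with the real work being the verification of conjugation invariance --- is essentially the Ocneanu-style argument carried out in [La3], so your approach matches the source's.
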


Note that the use of the looping elements $t_i^{\prime}$ enable the trace ${\rm tr}$ to be defined by just extending the three rules of the Ocneanu trace on the algebras ${\rm H}_n(q)$ \cite{Jo} by rule (4). Using $\textrm{tr}$ Lambropoulou constructed a universal Homflypt-type invariant for oriented links in ST. Namely, let $\mathcal{L}$ denote the set of oriented links in ST. Then:

\begin{thm} [Definition~1, \cite{La3}] \label{inv}
The function $X:\mathcal{L}$ $\rightarrow R(z,s_{k})$

\begin{equation*}
X_{\widehat{\alpha}}=\left[-\frac{1-\lambda q}{\sqrt{\lambda } \left(1-q\right)} \right]^{n-1} \left(\sqrt{\lambda } \right)^{e}
{\rm tr}\left(\pi \left(\alpha \right)\right),
\end{equation*}

\noindent where $\lambda := \frac{z+1-q}{qz}$, $\alpha \in B_{1,n}$ is a word in the $\sigma _{i}$'s and $t^{\prime}_{i} $'s, $\widehat{\alpha}$ is the closure of $\alpha$, $e$ is the exponent sum of the $\sigma _{i}$'s in $\alpha $, and $\pi$ the canonical map of $B_{1,n}$ in ${\rm H}_{1,n}(q)$, such that $t\mapsto t$ and $\sigma _{i} \mapsto g_{i} $, is an invariant of oriented links in {\rm ST}.
\end{thm}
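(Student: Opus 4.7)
The plan is to verify that $X_{\widehat{\alpha}}$ is invariant under each of the braid-equivalence moves (i)--(iii) of Theorem~\ref{markov}, which together realize ambient isotopy of oriented links in ST. Well-definedness of the expression on a given braid word is easy to dispense with first: the map $\pi$ is a well-defined homomorphism into ${\rm H}_{1,n}(q)$ and ${\rm tr}$ is well-defined on it by Theorem~\ref{tr}; for the scalar factor $(\sqrt\lambda)^{e}$ one checks that every defining relation of $B_{1,n}$, when written in the alphabet $\{\sigma_i^{\pm 1},(t^{\prime}_i)^{\pm 1}\}$, is homogeneous in the $\sigma_i$'s (each $t^{\prime}_i$ carries $\sigma$-exponent sum zero by construction), so $e$ descends to a function on $B_{1,n}$.

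For conjugation invariance, property~(1) of the trace gives ${\rm tr}(\pi(\beta^{-1}\alpha\beta))={\rm tr}(\pi(\alpha))$; the exponent sum $e$ is manifestly conjugation-invariant, and the braid index $n$ is unchanged, so $X_{\widehat\alpha}=X_{\widehat{\beta^{-1}\alpha\beta}}$. Loop conjugation (move~(iii)) is just the special case $\beta=t^{\pm 1}$ and requires no separate argument. This exhausts moves (i) and (iii).

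The crux is the stabilization move~(ii). For $\alpha\mapsto\alpha\sigma_n$ both $n$ and $e$ jump by one; trace rule~(3) gives ${\rm tr}(\pi(\alpha)g_n)=z\,{\rm tr}(\pi(\alpha))$, so invariance of $X$ reduces to the single scalar identity
$$\left[-\frac{1-\lambda q}{\sqrt\lambda(1-q)}\right]\sqrt\lambda\cdot z\;=\;1,$$
which is equivalent to $\lambda qz=z+1-q$, that is, to the very definition $\lambda=(z+1-q)/(qz)$. For the negative stabilization $\alpha\mapsto\alpha\sigma_n^{-1}$ we use the quadratic relation to write $g_n^{-1}=q^{-1}g_n+(q^{-1}-1)$; applying rule~(3) and the normalization rule~(2) yields ${\rm tr}(\pi(\alpha)g_n^{-1})=(q^{-1}z+q^{-1}-1)\,{\rm tr}(\pi(\alpha))$, and the required identity
$$\left[-\frac{1-\lambda q}{\sqrt\lambda(1-q)}\right](\sqrt\lambda)^{-1}(q^{-1}z+q^{-1}-1)\;=\;1$$
is equivalent, via the substitutions $q^{-1}z+q^{-1}-1=\lambda z$ and $1-\lambda q=(q-1)/z$, to the \emph{same} value $\lambda=(z+1-q)/(qz)$.

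The substantive point is therefore the consistency of a single $\lambda$ handling both signs of the stabilization; this is the familiar Homflypt rescaling trick, and it forces exactly the $\lambda$ in the statement. All the heavy lifting has already been done in Theorem~\ref{tr} (existence and uniqueness of ${\rm tr}$); what remains for the present theorem is the routine but essential verification above. The main conceptual obstacle to this type of argument, namely the inclusion of loop generators inside an Ocneanu-style trace, has been absorbed into rule~(4) of Theorem~\ref{tr}, which plays no role in the present Markov check but is what makes $X$ sensitive to the winding of strands around the fixed axis $\widehat I$.
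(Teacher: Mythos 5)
Your verification is correct and is essentially the argument behind this result: the paper itself imports the statement from \cite{La3} without proof, and the proof there is exactly this Markov-trace normalization — well-definedness of $e$, conjugation invariance from rule (1), and the two stabilization identities forcing $\lambda=(z+1-q)/(qz)$, with rule (4) only needed to make ${\rm tr}$ exist on $\bigcup_n {\rm H}_{1,n}(q)$. Your scalar computations for both signs of stabilization check out.
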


\bigbreak

In \cite{Tu, HK} ST was considered as ${\rm (Annulus)} \times {\rm (Interval)}$. In our braid setting, the elements of $\mathcal{S}({\rm ST})$ correspond bijectively to the elements of the following set $\Lambda^{\prime}$:

\begin{equation}\label{Lpr}
\Lambda^{\prime}=\{ {t^{k_0}}{t^{\prime}_1}^{k_1}{t^{\prime}_2}^{k_2} \ldots
{t^{\prime}_n}^{k_n}, \ k_i \in \mathbb{Z}\setminus\{0\}, \ k_i \geq k_{i+1}\ \forall i,\ n\in \mathbb{N} \}.
\end{equation}

\noindent So, we have that $\Lambda^{\prime}$ is a basis of $\mathcal{S}({\rm ST})$ in terms of braids. Note that $\Lambda^{\prime}$ is a subset of $\bigcup_n{\textrm{H}_{1,n}}$ and, in particular, $\Lambda^{\prime}$ is a subset of $\bigcup_n{\Sigma^{\prime}_n}$. Note also that in contrast to elements in $\bigcup_n{\Sigma^{\prime}_n}$, the elements in $\Lambda^{\prime}$ have no gaps in the indices, the exponents are ordered and there are no ``braiding tails''. The Lambropoulou invariant $X$ recovers $\mathcal{S}({\rm ST})$, because it gives distinct values to distinct elements, since $tr(t^{k_0}{t^{\prime}_1}^{k_1}{t^{\prime}_2}^{k_2} \ldots {t^{\prime}_n}^{k_n})=s_{k_n}s_{k_{n-1}}\ldots s_{k_1}s_{k_0}$.

\subsection{The new basis, $\Lambda$, of $\mathcal{S}({\rm ST})$}\label{lamb}

In \cite{DL1} we give a different basis $\Lambda$ for $\mathcal{S}({\rm ST})$, which was predicted by the third author. The new basic set is described in Eq.~\ref{basis} in open braid form. The looping elements $t_i$ are in the algebras $\textrm{H}_{1,n}(q)$ and they commute. Moreover, the $t_i$'s are consistent with the braid band move used in the link isotopy in $L(p,1)$, in the sense that a bbm can be described naturally with the use of the $t_i$'s (see for example \cite{DL1} and references therein).

\begin{thm}[DL, Theorem~2]\label{newbasis}
The following set is a $\mathbb{Z}[q^{\pm1}, z^{\pm1}]$-basis for $\mathcal{S}({\rm ST})$:
\begin{equation}\label{basis}
\Lambda=\{t^{k_0}t_1^{k_1}\ldots t_n^{k_n},\ k_i \in \mathbb{Z}\setminus\{0\},\ k_i \geq k_{i+1}\ \forall i,\ n \in \mathbb{N} \}.
\end{equation}
\end{thm}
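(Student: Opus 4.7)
The plan is to deduce the theorem from the fact that $\Lambda'$ is already a $\mathbb{Z}[q^{\pm 1},z^{\pm 1}]$-basis of $\mathcal{S}(\mathrm{ST})$ (Eq.~(\ref{Lpr}) and Theorem~\ref{tr}) by producing a triangular change-of-basis between $\Lambda$ and $\Lambda'$. Both sets are indexed by the same combinatorial data, namely finite monotone sequences $(k_0,k_1,\ldots,k_n)$ with $k_i\neq 0$ for $i\geq 1$, so there is a natural bijection $\varphi:\Lambda\to\Lambda'$ sending $t^{k_0}t_1^{k_1}\cdots t_n^{k_n}$ to $t^{k_0}{t'_1}^{k_1}\cdots{t'_n}^{k_n}$. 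I would show that, in $\mathcal{S}(\mathrm{ST})$, each element of $\Lambda$ equals its image under $\varphi$ with invertible coefficient, plus a sum of strictly smaller elements of $\Lambda'$ in a suitable well-ordering on monotone sequences; the theorem then follows.

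The key starting identity is $t_i = t'_i\cdot(g_1g_2\cdots g_i)^{2}$, obtained directly from the definitions in Eq.~(\ref{lgen}). Substituting this into a generic element of $\Lambda$ and using the quadratic relation $g_j^{2}=(q-1)g_j+q$, the braid relations of $B_{1,n}$, and the commutations $tg_i=g_it$ for $i\geq 2$, I would push all accumulated braiding tails toward the right. The resulting linear combination consists of monomials of the form $t^{\ell_0}{t'_1}^{\ell_1}\cdots{t'_m}^{\ell_m}\cdot\sigma$ with $\sigma$ a word in $\mathrm{H}_m(q)$. Passing to the skein module and applying the trace reductions (Theorem~\ref{tr}) to the final braiding tails $\sigma$ would then isolate the leading monomial $\varphi(t^{k_0}t_1^{k_1}\cdots t_n^{k_n})$ with invertible coefficient, the remaining monomials having strictly smaller exponent data in an appropriate lexicographic order.

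With triangularity in hand, spanning of $\Lambda$ follows by inverting the change-of-basis inductively over the ordering: every element of $\Lambda'$ is the leading term of a unique element of $\Lambda$, plus lower-order $\Lambda'$-elements already expressible in $\Lambda$ by the inductive hypothesis. Linear independence follows similarly, since the invariant $X$, and equivalently the Markov trace $\mathrm{tr}$, distinguishes elements of $\Lambda'$ through the distinct monomials $s_{k_n}s_{k_{n-1}}\cdots s_{k_0}$, and this distinguishability is preserved under a triangular change-of-basis with invertible diagonal.

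The main obstacle is the combinatorial explosion that appears when $(g_1\cdots g_i)^{2}$ is slid across subsequent looping generators $t_j^{k_j}$ with $j>i$: the braiding tail does not commute with them, and repeated use of the quadratic and braid relations produces many correction terms whose shape is not immediately monotone. Choosing a well-ordering on monotone exponent sequences for which every such correction is genuinely smaller, and verifying termination of the induction on every branch of the expansion, is the technical heart of the argument carried out in \cite{DL1}.
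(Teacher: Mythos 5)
Your overall strategy coincides with the one the paper (deferring to \cite{DL1}) actually uses: a change of basis between $\Lambda$ and $\Lambda'$ that is triangular with invertible diagonal with respect to a well-ordering on exponent sequences (Definition~\ref{order}), carried out by converting looping generators, pushing braiding tails to the right, and then removing them by conjugation and stabilization; the paper's sketch expands $\lambda'\in\Lambda'$ into elements of $\Lambda$ whereas you invert in the opposite direction, but that is the same argument.

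Two points need repair. First, your ``key starting identity'' $t_i=t'_i\,(g_1g_2\cdots g_i)^2$ is false. From Eq.~(\ref{lgen}) one gets $t'_i{}^{-1}t_i=(g_i\cdots g_1)(g_1\cdots g_i)$, i.e.
\begin{equation*}
t_i\;=\;t'_i\,g_i\cdots g_2\,g_1^2\,g_2\cdots g_i ,
\end{equation*}
which is the relation the paper itself uses in the form $t'_i=t_i\,g_i^{-1}\cdots g_2^{-1}g_1^{-2}g_2^{-1}\cdots g_i^{-1}$. For $i\geq 2$ the element $(g_1\cdots g_i)^2$ is not even a pure braid (its underlying permutation is a nontrivial power of an $(i{+}1)$-cycle), so $t'_i(g_1\cdots g_i)^2$ is not a looping element conjugate to $t$ and the subsequent expansion would not produce the claimed leading term. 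Second, eliminating the braiding tails is not a direct application of Theorem~\ref{tr}: rule (3) only removes a terminal $g_n$ (a Markov stabilization), so in the skein module the tails must be absorbed by repeated conjugation and stabilization, which generates new $t$-monomials with gaps and unordered exponents; controlling these under the ordering is exactly the content of Theorems~8--10 of \cite{DL1} and is where the real work lies, as you correctly anticipate in your last paragraph. With the identity corrected and the tail-elimination phrased via conjugation/stabilization rather than the trace, your proposal matches the paper's route.
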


\begin{figure}
\begin{center}
\includegraphics[width=1.6in]{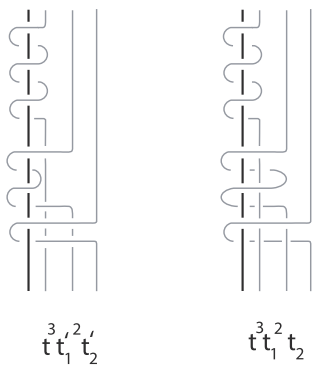}
\end{center}
\caption{Elements in two different bases of $\mathcal{S}({\rm ST})$.}
\label{basel}
\end{figure}

Notice that comparing the set $\Lambda$ with the sets $\Sigma_n$, we observe that there are no gaps in the indices of the $t_i$'s and the exponents are in decreasing order. Also, there are no ``braiding tails'' in the words in $\Lambda$.

\smallbreak

The importance of the new basis, $\Lambda$, of $\mathcal{S}({\rm ST})$ lies in the simplicity of the algebraic expression of a bbm, as given in Lemma~\ref{bbm1} and as illustrated in Figure~\ref{bbm12}.

\section{Reducing computations to the linear bases of the algebras ${\rm H}_{1,n}(q)$}

In order to compute $\mathcal{S}(L(p,1))$ we need to normalize the invariant $X$ by forcing it to satisfy all possible braid band moves. In the following sections we show that it suffices to consider elements in an augmented set $L$ and perform braid band moves only on the first moving strand of the mixed braids. More precisely, we first show that it suffices to consider elements in the canonical basis of the algebra $H_{1,n}(q)$, $\Sigma^{\prime}_{n}$, and perform bbm's on their first moving strand. We then pass from the linear basis of of ${\rm H}_{1,n}(q)$, $\Sigma_n$ and reduce the equations obtained from elements in $\Sigma_n^{\prime}$ to equations obtained from elements in $\Sigma$ by considering the performance of bbm's on their first moving strand. In order to reduce the computation to elements in the basis of $\mathcal{S}({\rm ST})$, $\Lambda$, we first order the exponents of the looping generators of elements in $\Sigma_n$ and obtain elements in the augmented set $L$, followed by ``braiding tails''. Note that the performance of braid band moves is now considered to take place on any moving strand. Then, we show that the equations obtained from elements in the ${\rm H}_{1,n}(q)$-module $L$ by performing braid band moves on any strand, are equivalent to equations obtained from elements in the ${\rm H}_{1,n}(q)$-module $\Lambda$ by performing braid band moves on any strand. We eliminate now the ``braiding tails'' from elements in the ${\rm H}_{1,n}(q)$-module $\Lambda$ and reduce the computations to the set $\Lambda$, where bbm's are performed on any moving strand.

\subsection{Reducing to braid band moves on $\Sigma^{\prime}$}

From now on we shall denote by $\Sigma$ the set $\cup_n \Sigma_n$ and similarly, by $\Sigma^{\prime}$ we shall denote the set $\cup_n \Sigma_n^{\prime}$. We now show that it suffices to perform bbm's on elements in the linear basis of ${\rm H}_{1,n}(q)$, $\Sigma^{\prime}$.
This is the first step in order to restrict the performance of bbm's only on elements in $\Lambda$. For this we need the following lemma:

\begin{lemma}\label{lbbm'skein}
Braid band moves and skein relation are interchangeable, that is, for $\tau_1^{\prime}\in \Sigma^{\prime}$ and $w \in {\rm H}_{n}(q)$ the following diagram commutes:
$$
\begin{CD}
\tau_1^{\prime} \cdot w @>(\pm)(p,1) bbm>> \tau_2^{\prime}\cdot w_+ g_1^{\pm1}\\
@VVquadraticV @VVquadraticV\\
\sum_j f_j(q)\tau_1^{\prime}\cdot w_j @>(\pm)(p,1) bbm>> \sum_j f_j(q) \tau_2^{\prime}\cdot w_{j_+} g_1^{\pm1}
\end{CD}
$$
\end{lemma}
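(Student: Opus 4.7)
The proof will proceed by verifying that the bbm operation descends to a well-defined operation on the Hecke algebra quotient, compatibly with the quadratic relation. Since both arrows in the diagram are $R$-linear, it suffices to check commutation on monomials and then extend by linearity.

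First I would unpack the horizontal arrow via Lemma~\ref{bbm1}: a bbm on the first moving strand sends a word $\alpha$ to $t^p \sigma_1^{\pm 1} \alpha_+$, where $\alpha_+$ denotes $\alpha$ with every generator index shifted up by one. Applied to $\tau_1^{\prime} \cdot w$, this produces $t^p \sigma_1^{\pm 1}(\tau_1^{\prime})_+ \cdot w_+$, which after absorbing everything other than $w_+ g_1^{\pm 1}$ into the factor $\tau_2^{\prime}$ is exactly the expression $\tau_2^{\prime} \cdot w_+ g_1^{\pm 1}$ appearing in the diagram. The essential feature is that this map is $R$-linear in $w$ and acts on $w$ only through the shift $g_i \mapsto g_{i+1}$. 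The vertical arrows, meanwhile, select some $g_i^2$ inside $w$ (respectively inside $w_+$) and replace it by $(q-1)g_i + q$ (respectively $(q-1)g_{i+1} + q$), yielding linear combinations $w = \sum_j f_j(q) w_j$ and $w_+ = \sum_j f_j(q) w_{j_+}$.

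The commutation of the diagram then reduces to the single observation that applying the quadratic relation commutes with the index shift: the identity $g_{i+1}^2 = (q-1)g_{i+1} + q$ in ${\rm H}_{1,n+1}(q)$ is obtained from $g_i^2 = (q-1)g_i + q$ in ${\rm H}_{1,n}(q)$ simply by shifting $i \mapsto i+1$. Hence if $w_j$ are the terms in the expansion of $w$ via the quadratic relation, then $w_{j_+}$ are precisely the terms in the corresponding expansion of $w_+$, and both paths around the diagram yield the same sum $\sum_j f_j(q)\, \tau_2^{\prime} \cdot w_{j_+}\, g_1^{\pm 1}$. The main obstacle, such as it is, is purely notational: one must verify that after the rearrangement defining $\tau_2^{\prime}$, the trailing $g_1^{\pm 1}$ sits on the right of $w_+$ as a free factor and is therefore undisturbed when the quadratic relation is applied inside $w_+$. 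Beyond this bookkeeping, the argument is immediate from linearity.
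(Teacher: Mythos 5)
Your proposal is correct and follows essentially the same route as the paper's own proof: both arguments come down to the $R$-linearity of the bbm in the braiding part $w$ together with the fact that the index shift $g_i \mapsto g_{i+1}$ respects the quadratic relation, so that expanding $w$ before or after the move yields the same sum $\sum_j f_j(q)\,\tau_2^{\prime}\cdot w_{j_+}\,g_1^{\pm 1}$. Your write-up actually makes the key point (compatibility of the shift with the quadratic relation, and the trailing $g_1^{\pm 1}$ being an undisturbed right factor) more explicit than the paper does.
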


\begin{proof}
Let $\tau_1^{\prime}$ a monomial in $t_i^{\prime}$'s, $w\in {\rm H}_{n}(q)$ such that $w=\sum_{i=1}^{n}{f_{i}(q)w_{i}}$, where $w_i$ are words in canonical form and $f_{i}(q)$ a one parameter expressions in $\mathbb{C}$ for all $i$. We perform a braid band move on $\tau_1^{\prime}\cdot w$ and obtain:

\[
\tau_1^{\prime}\cdot w \ \xrightarrow[bbm]{(\pm)(p,1)} \ \tau_2^{\prime}\cdot w_+g_1^{\pm1},
\]

\noindent where $w_+=\sum_{i=1}^{n}{f_{i}(q)w_{i_+}}$. Then:

\[
\tau_1^{\prime}\cdot w \ = \ \tau_1^{\prime}\cdot \sum_{i=1}^{n}{f_{i}(q)w_{i}}\ \xrightarrow[bbm]{(\pm)(p,1)} \ \tau_2^{\prime}\cdot \sum_{i=1}^{n}{f_{i}(q)w_{i_+}} g_1^{\pm1}.
\]

\noindent We also have that:

\[
\tau_1^{\prime}\cdot w_j \ \xrightarrow[bbm]{(\pm)(p,1)}\ \tau_2^{\prime}\cdot w_{j_+} g_1^{\pm1}\ \forall\ j,\ \text{and thus}
\]

\[
\tau_2^{\prime}\cdot \sum_{i=1}^{n}{f_{i}(q)w_{i}} g_1^{\pm1}\ =\ \tau_2^{\prime}\cdot w_{+} g_1^{\pm1},
\]
\noindent and this concludes the proof (see also Figure~\ref{bbm'skein}).
\end{proof}

\begin{figure}
\begin{center}
\includegraphics[width=4.5in]{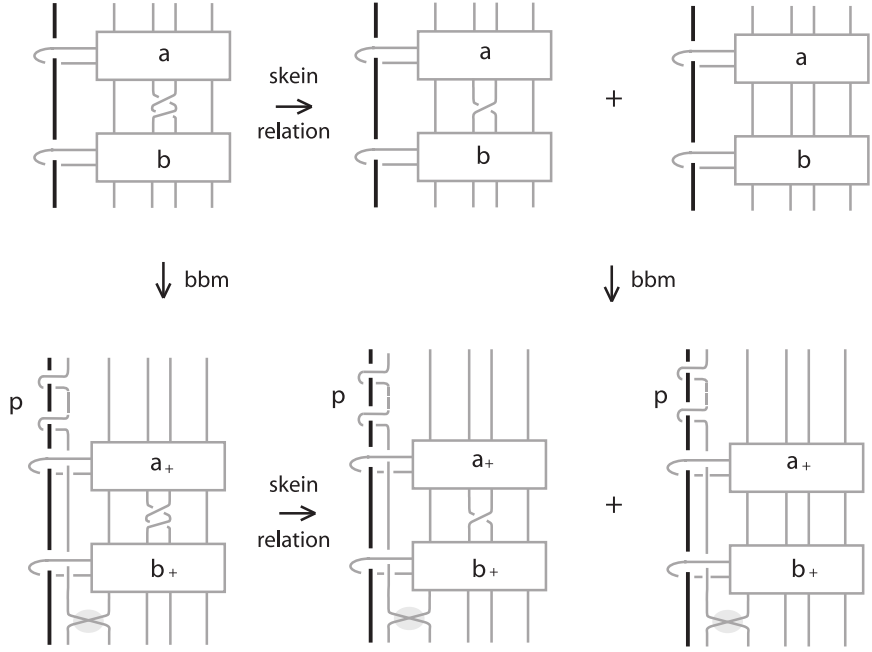}
\end{center}
\caption{Proof of Lemma~\ref{lbbm'skein}.}
\label{bbm'skein}
\end{figure}

Using Lemma~\ref{lbbm'skein}, we have the following:

\begin{prop}\label{bbm's'}
It suffices to consider the performance of braid band moves on the first strand of only elements in the sets $\Sigma^{\prime}$.
\end{prop}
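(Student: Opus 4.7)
My strategy is to combine Lemma~\ref{bbm1} (which localises every bbm to the first moving strand via an ST-isotopy) with Lemma~\ref{lbbm'skein} (which commutes a bbm with the quadratic/skein relation), and to exploit the linearity of the invariant $X$ built into the Markov trace of Theorem~\ref{tr}. Together these will reduce the bbm equation on an arbitrary mixed braid to a finite linear combination of bbm equations on basis elements of $\Sigma'_n$, each performed on the first moving strand.

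Concretely, I would proceed as follows. Fix an arbitrary $\alpha \in B_{1,n}$ and a moving strand index $i$. By Lemma~\ref{bbm1}, the required equation $X_{\widehat{\alpha}} = X_{\widehat{sl_{\pm i}(\alpha)}}$ is equivalent, after conjugating by $\sigma_1^{-1}\cdots\sigma_{i-1}^{-1}$ (an ST-isotopy under which $X$ is invariant), to $X_{\widehat{\alpha'}} = X_{\widehat{sl_{\pm 1}(\alpha')}}$ with $\alpha' = (\sigma_1^{-1}\cdots\sigma_{i-1}^{-1})\alpha(\sigma_{i-1}\cdots\sigma_1)$. Using the defining relations $t'_j = g_j\cdots g_1 t g_1^{-1}\cdots g_j^{-1}$ together with the braid relations of $B_{1,n}$, I may rewrite the image of $\alpha'$ in $\textrm{H}_{1,n}(q)$ in the form $\tau'\cdot w$, where $\tau'$ is a monomial in the looping generators $t'_j$ and $w$ is a word in the $g_i$'s. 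These two rewriting steps take place purely inside $B_{1,n}$, so they are compatible with a bbm on the nose.

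The remaining reduction --- expanding $w$ in the Jones basis $S$ of $\textrm{H}_n(q)$ via the quadratic relation --- is exactly handled by Lemma~\ref{lbbm'skein}: the commuting square of that lemma yields
\[
X_{\widehat{\tau'\cdot w}} \;=\; \sum_j f_j(q)\, X_{\widehat{\tau'\cdot w_j}}, \qquad X_{\widehat{sl_{\pm 1}(\tau'\cdot w)}} \;=\; \sum_j f_j(q)\, X_{\widehat{sl_{\pm 1}(\tau'\cdot w_j)}},
\]
where each $\tau'\cdot w_j$ is now an element of $\Sigma'$. Here I use the $\mathbb{C}$-linearity of $X$ that comes from the trace. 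Consequently, if the bbm equation holds for every element of $\Sigma'$ under a bbm on the first strand, then it holds for $\tau'\cdot w$, hence for $\alpha'$, and hence for $\alpha$ under a bbm on any strand, which is what the proposition asserts.

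The main obstacle, in my view, is bookkeeping: one must verify that the conjugation supplied by Lemma~\ref{bbm1} and the rewriting of $\alpha'$ into the factored form $\tau'\cdot w$ do not secretly invoke the quadratic relation before the bbm, since that would invalidate the use of Lemma~\ref{lbbm'skein}. Provided one carries out the braid-level rearrangements first and the Hecke expansion second, the remainder of the argument is just linearity of the Markov trace of Theorem~\ref{tr} together with the commutativity square of Lemma~\ref{lbbm'skein}.
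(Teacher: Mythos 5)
Your proposal is correct and follows essentially the same route as the paper: Artin's combing to put the braid in the form $\tau^{\prime}\cdot w$, then Lemma~\ref{lbbm'skein} together with the linearity of the trace to reduce to the basis elements of $\Sigma^{\prime}$, with Lemma~\ref{bbm1} supplying the localisation of the bbm to the first moving strand. One small caution: as the paper's own Remark immediately after this proposition points out, $X$ itself is \emph{not} linear (its normalisation factor depends on the exponent sum $e$), so the coefficients in your two displayed sums are not literally the $f_j(q)$ but must be corrected by powers of $\sqrt{\lambda}$ --- this is why the paper writes generic coefficients $A_i$ --- though the correction is the same on both sides and does not affect the equivalence of the systems of equations.
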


\begin{proof}
By Artin's combing we can write words in $B_{1,n}$ in the form $\tau^{\prime}\cdot w$, where $\tau^{\prime}$ is a monomial in $t_i^{\prime}$'s and $w \in B_n$.
By Lemma~\ref{lbbm'skein} we have that:
\[
\begin{array}{rcll}
X_{\widehat{\tau^{\prime}\cdot w}} & = & X_{\widehat{t^p \tau^{\prime \prime}\cdot \sigma_1^{\pm 1} \cdot w_{+}}} &\overset{(L.~\ref{lbbm'skein})}{\Rightarrow} \\
&&&\\
\sum_i{A_i \cdot X_{\widehat{\tau^{\prime}\cdot w_i}}} & = & \sum_i{A_i \cdot X_{\widehat{t^p \tau^{\prime \prime}\cdot \sigma_1^{\pm 1} \cdot w_{i_{+}}}}}, & \\
\end{array}
\]

\noindent where $w_i$ are words in reduced form in ${\rm H}_{1,n}(q),\ \forall i$ and $A_i \in \mathbb{C}$.
\end{proof}

\begin{remark}
Note that even though the trace is linear, the invariant $X$ is not a linear function. Namely, if $\tau = \tau_1 + \tau_2$, where $\tau, \tau_1, \tau_2 \in \Sigma^{\prime}_n$, such that $X_{\widehat{\tau}} = A\cdot {\rm tr}(\tau), X_{\widehat{\tau_1}}=A_1\cdot {\rm tr}(\tau_1)$ and $X_{\widehat{\tau_2}}=A_2\cdot {\rm tr}(\tau_2)$, where $A, A_1, A_2\in \mathbb{C}$, then:
\[
X_{\widehat{\tau}} = A\cdot {\rm tr}(\tau) = A\cdot {\rm tr}(\tau_1 + \tau_2) = A\cdot {\rm tr}(\tau_1) + A\cdot {\rm tr}(\tau_2) \neq A_1\cdot {\rm tr}(\tau_1) + A_2\cdot {\rm tr}(\tau_2) = X_{\widehat{\tau_1}} +X_{\widehat{\tau_2}}.
\]

\end{remark}

\subsection{From the set $\Sigma^{\prime}$ to the set $\Sigma$}

We shall now show that it suffices to perform bbm's on elements in the linear bases $\Sigma_n$ of the algebras ${\rm H}_{1,n}(q)$. Their union is the set $\Sigma$ which includes as a proper subset the basis, $\Lambda$, of $\mathcal{S}({\rm ST})$, described in \S~\ref{lamb}.

\smallbreak

Let $\tau^{\prime}\cdot w \in \Sigma^{\prime}$. We have that:
\[
\begin{matrix}
\tau^{\prime}\cdot w & = & (t^{k_0} {t^{\prime}_1}^{k_1 } \ldots {t^{\prime}_m}^{k_m})\cdot w & {=} &\underset{\tau}{\underbrace{ t^{k_0} (t_1g_1^{-2})^{k_1} \ldots ({t_mg_m^{-1}\ldots g_2^{-1} g_1^{-2} g_2^{-1} \ldots g_m^{-1}})^{k_m}}}\cdot w\ =\\
  &  &  &  &   \\
  & = & \tau \cdot w  &  &    \\
\end{matrix}
\]

Perform a bbm on the first moving strand of both $\tau^{\prime}\cdot w$ and
$\tau \cdot w$ and cable the new parallel strand together with the surgery strand. Denote the result as $cbl(ps)$. Then:
\[
\begin{matrix}
\tau^{\prime}\cdot w & \overset{bbm}{\rightarrow} & cbl(ps) \cdot \tau^{\prime}\cdot w \cdot \sigma_1^{\pm 1}\\
\parallel & & \parallel \\
\tau\cdot w & \overset{bbm}{\rightarrow} & cbl(ps) \cdot \tau\cdot w \cdot \sigma_1^{\pm 1}\\
\end{matrix}
\]
So: $X_{\widehat{\tau^{\prime}\cdot w}}\ =\ X_{\widehat{sl(\tau^{\prime}\cdot w)}}\ \Leftrightarrow \ X_{\widehat{\tau\cdot w}}\ =\ X_{\widehat{sl(\tau\cdot w)}}$.
But since $\tau\cdot w \in {\rm H}_{1,n}(q)$, we can express $\tau\cdot w$ as a sum of elements in the linear basis of ${\rm H}_{1,n}(q)$, $\Sigma$, that is, $\tau\cdot w = \sum_{i}{a_i T_i \cdot w_i}$, where $T_i\cdot w_i \in \Sigma,\ \forall i$, $T_i$ a monomial in $t_i$'s with possible gaps in the indices and
unordered exponents, and $a_i \in \mathbb{C},\ \forall i$.
\[
\begin{array}{lllll}
X_{\widehat{\tau\cdot w}}\ = \ X_{\widehat{sl(\tau\cdot w)}} & \Rightarrow & \alpha \cdot tr(\tau\cdot w) & = & b\cdot tr\left(cbl(ps) \tau \cdot w \cdot g_1^{\pm1} \right)\\
& \Rightarrow & \alpha \cdot \sum_{i}{a_i\cdot tr(T_i\cdot w_i)} & = & b \cdot \sum_{i}{tr\left(a_i \cdot cbl(ps)T_i\cdot w_i\cdot g_1^{\pm1} \right)} \\
\end{array}
\]

We conclude that:

\[
\begin{matrix}
\tau^{\prime}\cdot w & \overset{bbm}{\rightarrow} & cbl(ps) \cdot \tau^{\prime}\cdot w \cdot \sigma_1^{\pm 1} & (\ast) \\
\parallel & & \parallel &\\
\tau\cdot w & \overset{bbm}{\rightarrow} & cbl(ps) \cdot \tau\cdot w \cdot \sigma_1^{\pm 1}&\\
\parallel & & \parallel &\\
\sum_{i}{a_i\cdot T_i\cdot w_i} & \overset{bbm}{\rightarrow} & \sum_{i}{a_i\cdot t^p{T_i}_{+}\cdot {w_i}_{+}g_1^{\pm1}}& (\ast \ast)\\
\end{matrix}
\]

The above are summarized in the following proposition (see also Figure~\ref{prop2a}):

\begin{prop}\label{picprop2}
The equations

\begin{equation}\label{eq1}
X_{\widehat{T^{\prime}\cdot w}} \ =\ X_{\widehat{t^p T^{\prime \prime}\cdot g_1^{\pm 1}\cdot w_+}}
\end{equation}

\noindent result from equations of the form 

\begin{equation}\label{eq2}
X_{\widehat{T\cdot w}} \ =\ X_{\widehat{t^p T_+ \cdot g_1^{\pm 1}\cdot w_+}},
\end{equation}

\noindent where $T\cdot w \in \Sigma, \ \forall i$.
\end{prop}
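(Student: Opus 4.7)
The strategy follows the diagrammatic chains $(\ast)$ and $(\ast\ast)$ already displayed just before the statement: I would convert every equation of type (\ref{eq1}) into an $R$-linear combination of equations of type (\ref{eq2}), and then invoke (\ref{eq2}) term by term to deduce (\ref{eq1}).

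First I would use the definitions (\ref{lgen}) of the looping generators to rewrite $T^{\prime}=t^{k_0}{t^{\prime}_1}^{k_1}\cdots {t^{\prime}_m}^{k_m}$ as a braid word $\tau\cdot v$, where $\tau=t^{k_0}t_1^{k_1}\cdots t_m^{k_m}$ is a monomial in the $t_i$'s and $v\in B_n$ absorbs the compensating powers of $g_j$ produced by each substitution ${t^{\prime}_i}^{k_i}=(t_i\, g_i^{-1}\cdots g_1^{-2}\cdots g_i^{-1})^{k_i}$. Since this is an identity in $B_{1,n}$, $T^{\prime}\cdot w=\tau\cdot v\cdot w$ as braid words; their closures are the same link, and a braid band move on the first moving strand of each yields the same mixed braid. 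In particular the two sides of (\ref{eq1}) coincide with the $X$-values of $\tau\cdot v\cdot w$ and its slid counterpart.

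Next, since $\tau\cdot v\cdot w\in{\rm H}_{1,n}(q)$, I would expand it in the basis $\Sigma$ as $\sum_i a_i\, T_i\cdot w_i$ with $T_i\cdot w_i\in\Sigma$ and $a_i\in R$. The slide operation $\beta\mapsto t^p\beta_+\, g_1^{\pm 1}$ of Lemma~\ref{bbm1} is $R$-linear on ${\rm H}_{1,n+1}(q)$, because the index shift $(\cdot)_+$ is an algebra embedding and left multiplication by $t^p g_1^{\pm 1}$ is linear. Hence it commutes with the $\Sigma$-expansion, mirroring Lemma~\ref{lbbm'skein}, giving $sl_{\pm 1}(\tau\cdot v\cdot w)=\sum_i a_i\, sl_{\pm 1}(T_i\cdot w_i)$ in ${\rm H}_{1,n+1}(q)$. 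Applying the Markov trace then turns equation (\ref{eq1}) into an $R$-linear combination of the corresponding pieces of equations (\ref{eq2}), and so (\ref{eq1}) is a consequence of the family (\ref{eq2}).

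The main obstacle I foresee is the subtlety flagged in the Remark: $X$ is not linear as a function on the algebra, so care is needed when passing from an identity in ${\rm H}_{1,n}(q)$ to an identity among $X$-values. The resolution is to work at the level of ${\rm tr}$, which is genuinely linear, and factor out the prefactor $\bigl[-(1-\lambda q)/(\sqrt{\lambda}(1-q))\bigr]^{n-1}(\sqrt{\lambda})^e$ of $X$ explicitly. The braid index $n$ is preserved throughout; the discrepancies in the $\sigma_j$-exponent sum among the basis elements $T_i\cdot w_i$ are exactly those governed by the quadratic relation $g_i^2=(q-1)g_i+q$ of ${\rm H}_{1,n}(q)$, and this is precisely the relation whose compatibility with $X$ makes $X$ descend to the skein module in the first place. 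Once this bookkeeping is made explicit, the reduction is a direct application of the linearity of ${\rm tr}$.
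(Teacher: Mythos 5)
Your proposal is correct and follows essentially the same route as the paper: rewrite $T^{\prime}\cdot w$ as a word in the $t_i$'s with compensating braiding generators (so the closure and its slid counterpart coincide with those of the original), expand in the basis $\Sigma$, and combine the interchangeability of the braid band move with the skein expansion (Lemma~\ref{lbbm'skein}) with the linearity of ${\rm tr}$. Your explicit bookkeeping of the prefactors to address the non-linearity of $X$ is, if anything, more careful than the paper's own sketch, which packages the same step via the cabled parallel strand $cbl(ps)$ and the coefficients $\alpha$, $b$.
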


\begin{figure}
\begin{center}
\includegraphics[width=2.2in]{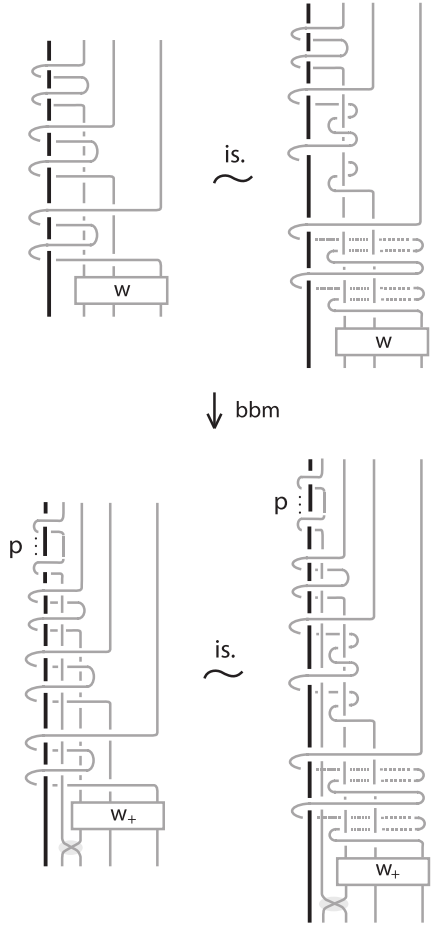}
\end{center}
\caption{Proof of Proposition~\ref{picprop2}. }
\label{prop2a}
\end{figure}

Note that elements in $\Sigma$ consist of two parts:
\begin{itemize}
\item[$\bullet$] A monomial in $t_i$'s with possible gaps in the indices and unordered exponents,
\end{itemize}
\smallbreak
\noindent followed by
\smallbreak
\begin{itemize}
\item[$\bullet$] a ``braiding tail'' in the basis of ${\rm H}_n(q)$.
\end{itemize}

In order to prove that the system obtained from elements in $\Sigma$ is equivalent to the system obtained from elements in $\Lambda$, we first manage
the gaps in the indices in the monomials in $t_i$'s, we then order the exponents and finally we eliminate the tails. The procedure is similar to the one
described in \cite{DL1}, but in this case we do that simultaneously before and after the performance of a braid band move and show that the equations obtained
from elements in the sets  $\Sigma$ and $\Lambda$ are equivalent.

\section{Reducing computations to the basis $\Lambda$ of $\mathcal{S}({\rm ST})$}


In order to reach to equations obtained from elements in the set $\Lambda$, we follow the same steps as in the proof of Theorem~\ref{newbasis} \cite{DL1}. The proof of Theorem~\ref{newbasis} is based on the following idea: We start with an element $\lambda^{\prime}\in \Lambda^{\prime}$ and convert it into a linear combination of elements in $\Lambda$. First we pass by elements in the sets $\Sigma$. This means that in the converted expression of $\lambda^{\prime}$ we have monomials in the $t_i$'s, with possible gaps in the indices and possible non ordered exponents followed by monomials in the braiding generators $g_i$. So, in order to reach expressions in the set $\Lambda$ we first manage the gaps in the indices on the $t_i$'s. For this we define an ordering relation in the sets $\Sigma^{\prime}$ and $\Sigma$, which include $\Lambda^{\prime}$ and $\Lambda$ as subsets.

\subsection{An ordering for the sets $\Sigma^{\prime}, \Sigma, \Lambda^{\prime}$ and $\Lambda$}

In the definition of the ordering relation we use the notion of the index of a word in $\Lambda^{\prime}$ or in $\Lambda$, denoted $ind(w)$, which is defined to be the highest index of the $t_i^{\prime}$'s, resp. of the $t_i$'s, in the word. Similarly, the index of an element in $\Sigma^{\prime}$ or in $\Sigma$ is defined in the same way by ignoring possible gaps in the indices of the looping generators and by ignoring the braiding part in $\textrm{H}_{n}(q)$. Moreover, the index of a monomial in $\textrm{H}_{n}(q)$ is equal to $0$.

\smallbreak

We are now in position to define the ordering relation on the sets $\Sigma$, $\Sigma^{\prime}$, $\Lambda$ and $\Lambda^{\prime}$:

\begin{defn}[Definition~2, \cite{DL1}] \label{order}
\rm
 We define the following {\it ordering} in the sets $\Sigma_{n}^{\prime}$.

\noindent Let $w={t^{\prime}_{i_1}}^{k_1}{t^{\prime}_{i_2}}^{k_2}\ldots {t^{\prime}_{i_{\mu}}}^{k_{\mu}}$ and $\sigma={t^{\prime}_{j_1}}^{\lambda_1}{t^{\prime}_{j_2}}^{\lambda_2}\ldots {t^{\prime}_{j_{\nu}}}^{\lambda_{\nu}}$, where $k_t , \lambda_s \in \mathbb{Z}$, for all $t,s$. Then:

\smallbreak

\begin{itemize}
\item[(a)] If $\sum_{i=0}^{\mu}k_i < \sum_{i=0}^{\nu}\lambda_i$, then $w<\sigma$.

\vspace{.1in}

\item[(b)] If $\sum_{i=0}^{\mu}k_i = \sum_{i=0}^{\nu}\lambda_i$, then:

\vspace{.1in}

\noindent  (i) if $ind(w)<ind(\sigma)$, then $w<\sigma$,

\vspace{.1in}

\noindent  (ii) if $ind(w)=ind(\sigma)$, then:

\vspace{.1in}

\noindent \ \ \ \ ($\alpha$) if $i_1=j_1, i_2=j_2, \ldots , i_{s-1}=j_{s-1}, i_{s}<j_{s}$, then $w>\sigma$,

\vspace{.1in}

\noindent \ \ \ \ ($\beta$) if $i_t=j_t\ \forall t$ and $k_{\mu}=\lambda_{\mu}, k_{\mu-1}=\lambda_{\mu-1}, \ldots k_{i+1}=\lambda_{i+1}, |k_i|<|\lambda_i|$, then $w<\sigma$,

\vspace{.1in}

\noindent \ \ \ \ ($\gamma$) if $i_t=j_t\ \forall t$ and $k_{\mu}=\lambda_{\mu}, k_{\mu-1}=\lambda_{\mu-1}, \ldots k_{i+1}=\lambda_{i+1}, |k_i|=|\lambda_i|$ and $k_i>\lambda_i$, then $w<\sigma$,

\vspace{.1in}

\noindent \ \ \ \ ($\delta$) if $i_t=j_t\ \forall t$ and $k_i=\lambda_i$, $\forall i$, then $w=\sigma$.

\vspace{.1in}

\item[(c)] In the general case where $w={t^{\prime}_{i_1}}^{k_1}{t^{\prime}_{i_2}}^{k_2}\ldots {t^{\prime}_{i_{\mu}}}^{k_{\mu}} \cdot \beta_1$ and $\sigma={t^{\prime}_{j_1}}^{\lambda_1}{t^{\prime}_{j_2}}^{\lambda_2}\ldots {t^{\prime}_{j_{\nu}}}^{\lambda_{\nu}}\cdot \beta_2$, where $\beta_1, \beta_2 \in \textrm{H}_n(q)$, the ordering is defined in the same way by ignoring the braiding parts $\beta_1, \beta_2$.
\end{itemize}
\end{defn}

\begin{defn} \label{level}
\rm
We define the \textit{subset of level $k$}, $\Lambda_k$, of $\Lambda$ to be the set
$$\Lambda_k:=\{t^{k_0}t_1^{k_1}\ldots t_{m}^{k_m} | \sum_{i=0}^{m}{k_i}=k,\ k_i \in \mathbb{Z}\setminus\{0\},\  k_i \geq k_{i+1}\ \forall i \}.$$
\end{defn}

In \cite{DL1} it was shown that the sets $\Lambda_k$ are totally ordered and well ordered for all $k$ (Proposition~2 \cite{DL1}).

\subsection{Managing the gaps: From bbm's on $\Sigma$ to bbm's on the ${\rm H}_n(q)$-module $L$}

A word in $\Sigma$ is a monomial on $t_i$'s followed by a ``braiding tail'', a monomial on the $g_i$'s. This braiding monomial is a word in the algebra ${\rm H}_n(q)$. The monomial on the $t_i$'s may have gaps in the indices. Using the ordering relation given in Definition~\ref{order} we can manage these gaps by showing that a monomial on $t_i$'s can be expressed as a sum of monomials on $t_i$'s with consecutive indices, which are of less order than the initial word and which are followed by braiding tails (see Theorem~8 \cite{DL1}). We can prove this by only applying conjugation. Note that topologically conjugation corresponds to closing the mixed braid.

\begin{nt}
\rm
For the expressions we obtain after conjugations we use the notation $\widehat{=}$. We also use the symbol $\simeq$ when a stabilization move is performed and $\widehat{\simeq}$ when both stabilization moves and conjugation are performed.
\end{nt}

After managing the gaps in the indices of the $t_i$'s we obtain monomials on $t_i$'s with consecutive indices followed by ``braiding tails''. Note that the exponents of the $t_i$'s are not ordered, so these monomials do not necessarily belong to the basis $\Lambda$ of $\mathcal{S}({\rm ST})$. In order to restrict the bbm's only on elements in $\Lambda$, we need first to augment the set $\Lambda$. So, as a first step we consider the augmented set $L$ in $\mathcal{S}({\rm ST})$ that contains monomials on the $t_i$'s with consecutive indices. In this subsection we consider the set $L$ as an ${\rm H}_n(q)$-module.

\begin{defn}\label{expsetl}
\rm
We define the sets:
$$L^{(n)}\ :=\{t^{k_0}t_1^{k_1}\ldots t_{n}^{k_n},\ k_i \in \mathbb{Z}^*\},\quad L := \bigcup_n L^{(n)},$$ 
\noindent and the \textit{subset of level $k$}, $L_k$, of $L$:
$$L_k:=\{t^{k_0}t_1^{k_1}\ldots t_{m}^{k_m} | \sum_{i=0}^{m}{k_i}=k,\ k_i \in \mathbb{Z}^*\}.$$
\end{defn}

We now show that Eqs~(\ref{eq2}) (Proposition~2) reduce to equations of the same type, but with elements in the set $L$ (i.e. no gaps in the indices).
We need the following lemma, which serve as the basis of the induction applied to prove the main result of this section, Proposition~\ref{imp}.

\begin{lemma}\label{gap}
The equations $X_{\widehat{t_1^{k}}}\ =\ X_{\widehat{t^pt_2^{k}\sigma_1^{\pm 1}}}$ are equivalent to the equations
\[
\begin{array}{llll}
X_{\widehat{{t}^{u_0}{t_1}^{u_1}}} & = & X_{\widehat{t^pt_1^{u_0}t_2^{u_1}\sigma_1^{\pm 1}}}, &\forall\ u_0,u_1<k\ :\ u_0 +u_1\ =\ k, \\
X_{\widehat{t^{k}}} & = & X_{\widehat{t^pt_1^{k}\sigma_1^{\pm 1}}}, & {bbm\ on\ 1st\ strand},\\
X_{\widehat{t^{k}}} & = & X_{\widehat{t^pt_1^{k} \sigma_2\sigma_1^{\pm 1}}\sigma_2^{-1}} & {bbm\ on\ 2nd \ strand}.\\
\end{array}
\]
\end{lemma}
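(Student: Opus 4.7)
The plan is to apply the gap-management technique from Theorem~8 of \cite{DL1} simultaneously to both sides of the equation, exploiting the fact that this procedure is a formal combinatorial reduction using conjugation (represented by $\widehat{=}$) and the Hecke quadratic relation $g_1^2 = (q-1)g_1 + q$. On the left, $t_1^k = (g_1 t g_1)^k$ has a gap at index $0$. Cycling the word inside $\widehat{=}$ to expose a factor of $t g_1^2$ and then applying the quadratic relation produces two summands in which the leading looping-generator index has dropped from $1$ to $0$. Iterating this procedure while keeping track of the total looping exponent yields an identity of the form
\[
t_1^k \ \widehat{=}\ \sum_{u_0 + u_1 = k} c_{u_0,u_1}\, t^{u_0} t_1^{u_1}\, w_{u_0,u_1},
\]
with scalars $c_{u_0,u_1} \in \mathbb{Z}[q^{\pm1}]$ and braiding tails $w_{u_0,u_1} \in {\rm H}_n(q)$. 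All summands are of strictly smaller order than $t_1^k$ in the sense of Definition~\ref{order}, and the list of $(u_0,u_1)$-monomials that appears is precisely $\{t^k\} \cup \{t^{u_0} t_1^{u_1} : u_0+u_1=k,\ u_0,u_1 < k\}$.

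Next I would apply the \textit{same} formal reduction to the right-hand side $t^p t_2^k \sigma_1^{\pm 1}$, which, by Lemma~\ref{bbm1}, is exactly the image of $t_1^k$ under the uniform index shift $t_i \mapsto t_{i+1}$ together with the insertion of the bbm-prefix $t^p \sigma_1^{\pm 1}$. Every conjugation and every application of the quadratic relation used on the LHS has a natural counterpart on the RHS at shifted indices, so the scalars $c_{u_0,u_1}$ are reproduced on the nose. The summand corresponding to $u_1 = 0$ yields, on the right, the term $t^p t_1^k \sigma_1^{\pm 1}$, which is the bbm on the first moving strand of $t^k$; the mixed summands $t^{u_0} t_1^{u_1}$ with $u_0, u_1 < k$ yield $t^p t_1^{u_0} t_2^{u_1} \sigma_1^{\pm 1}$, as in the listed system. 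The subtle point is that the gap-filling conjugation by $g_1$ used to produce the $t^k$-summand on the LHS becomes, under the index shift, a conjugation by $\sigma_2$ on the RHS, and this conjugation cannot be pushed past the bbm-factor $\sigma_1^{\pm 1}$ because $t_1$ does not commute with $g_2$ in $H_{1,n}(q)$. This obstructed conjugation contributes the additional summand $t^p t_1^k \sigma_2 \sigma_1^{\pm 1} \sigma_2^{-1}$, which is precisely the canonical form of a bbm on the second moving strand of $t^k$ derived from Lemma~\ref{bbm1}.

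Equating the two expansions term-by-term then decomposes the single equation $X_{\widehat{t_1^k}} = X_{\widehat{t^p t_2^k \sigma_1^{\pm 1}}}$ into a $\mathbb{Z}[q^{\pm1}]$-linear combination of the listed equations. By the total ordering on $\Sigma_n$ of Definition~\ref{order} and the well-ordering of the sets $\Lambda_k$ established in Proposition~2 of \cite{DL1}, this system has a triangular structure with invertible diagonal, so each direction of the equivalence follows by linear inversion.

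The main obstacle I anticipate is the precise identification of the second-strand bbm term. One must verify diagrammatically that, when the gap-filling conjugation on the LHS is translated to the RHS by the index shift, the resulting $\sigma_2$-conjugation genuinely encloses the bbm-factor $\sigma_1^{\pm 1}$ rather than collapsing against it, and that the enclosed expression agrees, up to closure, with the canonical expression produced by Lemma~\ref{bbm1} for a bbm on the second strand of $t^k$. Carrying out this bookkeeping — tracking which braiding generators must separate the filled-in $t$-factor from the bbm-prefix — is the technical core of the argument and is most cleanly executed by explicit mixed-braid diagrams in the spirit of Theorem~8 of \cite{DL1}.
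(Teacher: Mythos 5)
Your proposal follows essentially the same route as the paper's proof: both sides are expanded in parallel by conjugation, the Hecke quadratic relation and stabilization moves; the mixed summands $t^{u_0}t_1^{u_1}$ pair with $t^p t_1^{u_0}t_2^{u_1}\sigma_1^{\pm 1}$, and the $t^k$-summand splits into a piece pairing with the first-strand bbm term $t^p t_1^{k}\sigma_1^{\pm 1}$ and a piece whose shifted conjugating $\sigma_2$ cannot be collapsed past the bbm-factor $\sigma_1^{\pm 1}$, producing exactly the second-strand term $t^p t_1^{k}\sigma_2\sigma_1^{\pm 1}\sigma_2^{-1}$. This is precisely the mechanism of the paper's explicit computation, so the plan is sound and matches the paper's argument.
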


\begin{proof}
We have that:

\[
\begin{array}{ccccc}
t_1^k & = & \underline{t_1^{k-1}\sigma_1}t\sigma_1 & {=} & (q-1)\sum_{j=0}^{k-2}{q^jt^{j+1}t_1^{k-1-j}\sigma_1}\ +\ q^{k-1}\underline{\sigma_1}t^k \sigma_1\\
\downarrow & & & & \downarrow\\
t^{p}t_2^{k}\sigma_1^{\pm 1} & = & t^p\underline{t_2^{k-1}\sigma_2}t_1\sigma_2\sigma_1^{\pm 1} & {=} & (q-1)\sum_{j=0}^{k-2}{q^jt^pt_1^{j+1}t_2^{k-1-j}\sigma_2\sigma_1^{\pm 1}}\ +\ q^{k-1}t^p\underline{\sigma_2}t_1^k \sigma_2\sigma_1^{\pm 1}\\
\end{array}
\]

\noindent Applying now the skein relation we have:
\[
\begin{array}{ccccc}
q^{k-1}\underline{\sigma_1}t^k \sigma_1 & \widehat{=} & q^{k-1}t^k \sigma_1^2 & \overset{skein}{=} & q^{k-1}(q-1)t^k \sigma_1 + q^{k}t^k \\
& & & & \\
q^{k-1}t^p\underline{\sigma_2}t_1^k\sigma_2 \sigma_1^{\pm 1} & \widehat{=} & q^{k-1}t^pt_1^k\sigma_2 \sigma_1^{\pm 1} \underline{\sigma_2} & \overset{skein}{=} &  q^{k}t^pt_1^k\sigma_2 \sigma_1^{\pm 1} \sigma_2^{-1} + q^{k-1}(q-1)t^pt_1^k\sigma_2 \sigma_1^{\pm 1}\\
\end{array}
\]

\noindent and by applying a stabilization move we have:
\[
\begin{array}{rcccc}
q^{k-1}(q-1)t^k \sigma_1 + q^{k}t^k & \simeq & q^{k-1}(q-1)zt^k &+ & q^{k}t^k \\
& & \downarrow && \downarrow\\
q^{k}t^pt_1^k\sigma_2 \sigma_1^{\pm 1} \sigma_2^{-1} + q^{k-1}(q-1)t^pt_1^k\sigma_2 \sigma_1^{\pm 1} & \simeq & q^{k-1}(q-1)zt^pt_1^k\sigma_1^{\pm 1} &+ & q^{k}t^pt_1^k\sigma_2 \sigma_1^{\pm 1} \sigma_2^{-1} \\
\end{array}
\]

\noindent Moreover,

 $(q-1)\sum_{j=0}^{k-2}{q^j t^{j+1}\underline{t_1^{k-1-j}\sigma_1}}\ {=} $\\

 $=\ (q-1)\sum_{j=0}^{k-2}{q^j t^{j+1}\cdot \left[(q-1) \sum_{\phi=0}^{k-2-j}{q^{\phi}t^{\phi}{t_1}^{k-1-j-\phi}}\ + \ q^{k-1-j}\sigma_1 t^{k-1-j}\right]} \ =$\\

 $=\ (q-1)^2\sum_{j=0}^{k-2}\sum_{\phi=0}^{k-2-j}q^{j+\phi}t^{j+1+\phi}t_1^{k-1-j-\phi}\ +\ (q-1)\sum_{j=0}^{k-2}q^{k-1}t^{j+1}t\underline{\sigma_1}t^{k-1-j}\ \simeq$\\

 $\simeq \  (q-1)^2\sum_{j=0}^{k-2}\sum_{\phi=0}^{k-2-j}q^{j+\phi}t^{j+1+\phi}t_1^{k-1-j-\phi}\ +\ (q-1)(k-1)q^{k-1}zt^{k}$,\\

\noindent and

$(q-1)\sum_{j=0}^{k-2}{q^j t^pt_1^{j+1}\underline{t_2^{k-1-j}\sigma_2}\sigma_1^{\pm 1}}\ {=} $\\

 $=\ (q-1)\sum_{j=0}^{k-2}{q^j t^pt_1^{j+1}\cdot \left[(q-1) \sum_{\phi=0}^{k-2-j}{q^{\phi}t_1^{\phi}{t_2}^{k-1-j-\phi}}\ + \ q^{k-1-j}\sigma_2 t_1^{k-1-j}\right]\sigma_1^{\pm 1}} \ =$\\

 $=\ (q-1)^2\sum_{j=0}^{k-2}\sum_{\phi=0}^{k-2-j}q^{j+\phi}t^pt_1^{j+1+\phi}t_2^{k-1-\phi}\sigma_1^{\pm 1}\ +\ (q-1)\sum_{j=0}^{k-2}q^{k-1}t^pt_1^{j+1}t\underline{\sigma_2}t_1^{k-1-j}\sigma_1^{\pm 1}\ \simeq$\\

 $\simeq \  (q-1)^2\sum_{j=0}^{k-2}\sum_{\phi=0}^{k-2-j}q^{j+\phi}t^pt_1^{j+1+\phi}t_2^{k-1-j-\phi}\sigma_1^{\pm 1}\ +\ (q-1)(k-1)q^{k-1}zt^pt_1^{k}\sigma_1^{\pm 1}$.\\

So we have the following:

\[
\begin{array}{lcl}
t_1^{k} & \overset{bbm\ 1^{st}-str.}{\longrightarrow} & t^{p}t_2^{k}\sigma_1^{\pm 1}\\
\widehat{\simeq} & & \widehat{\simeq}\\
{\scriptstyle (q-1)^2\sum_{j=0}^{k-2}\sum_{\phi=0}^{k-2-j}q^{j+\phi}t^{j+1+\phi}t_1^{k-1-j-\phi}} & \overset{bbm\ 1^{st}-str.}{\longrightarrow} & {\scriptstyle (q-1)^2\sum_{j=0}^{k-2}\sum_{\phi=0}^{k-2-j}q^{j+\phi}t^pt_1^{j+1+\phi}t_2^{k-1-j-\phi}\sigma_1^{\pm 1}} \\
 (q-1)(k-1)q^{k-1}zt^{k} & \overset{bbm\ 1^{st}-str.}{\longrightarrow} &  (q-1)(k-1)q^{k-1}zt^pt_1^{k}\sigma_1^{\pm 1}\\
 (q-1)q^{k-1}zt^{k} & \overset{bbm\ 1^{st}-str.}{\longrightarrow} & (q-1)q^{k-1}zt^pt_1^{k}\sigma_1^{\pm 1} \\
 q^kt^k & \overset{bbm\ 2^{nd}-str.}{\longrightarrow} & q^{k}zt^pt_1^{k}\sigma_2\sigma_1^{\pm 1} \sigma_2^{-1}\\
\end{array}
\]
\noindent and this concludes the proof.
\end{proof}

\begin{prop}\label{imp}
In order to obtain an equivalent infinite system to the one obtained from elements in $\Sigma$ by performing bbm on the first moving strand,
it suffices to consider monomials in $L$ followed by braiding tails in ${\rm H}_n(q)$ and perform a braid band move on any strand.
\end{prop}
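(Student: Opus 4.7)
The plan is to proceed by induction on the complexity of $\Sigma$-elements with respect to the ordering of Definition~\ref{order}, using Lemma~\ref{gap} as the archetypal base case. Given $T\cdot w \in \Sigma$ with $T$ a monomial in the $t$-generators having gaps in its indices and $w \in {\rm H}_n(q)$ a braiding tail, the goal is to convert the bbm equation $X_{\widehat{T\cdot w}} = X_{\widehat{t^p T_+ g_1^{\pm 1} w_+}}$ from Eq.~(\ref{eq2}) into a finite $\mathbb{Z}[q^{\pm 1}]$-linear combination of bbm equations on $L$-elements followed by braiding tails, where the bbm may now be performed on any moving strand. The converse direction is immediate: any equation arising from $L \cdot {\rm H}_n(q)$ is itself an equation arising from $\Sigma$, once bbm on arbitrary strands is allowed, since $L \subset \Sigma$ and bbm on the first strand is obtained by a conjugation from bbm on any other strand (as in Lemma~\ref{bbm1}).

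For the inductive step, take $T \cdot w$ whose $t$-part has a gap, and let $j$ be the smallest index such that $t_j$ is absent from $T$ while some $t_{j+1}^{k_{j+1}}$ appears. I would apply the identity $t_{j+1}^{k_{j+1}} = t_{j+1}^{k_{j+1}-1} g_{j+1} t_j g_{j+1}$ and expand iteratively, mirroring the expansion $t_1^{k} = (q-1)\sum_{\ell} q^{\ell} t^{\ell+1} t_1^{k-1-\ell} g_1 + q^{k-1} g_1 t^{k} g_1$ that drives the proof of Lemma~\ref{gap}. Then, after conjugating to bring the trailing $g_{j+1}$ adjacent to another copy of $g_{j+1}$, apply the quadratic relation $g_{j+1}^{2} = (q-1)g_{j+1} + q$ and absorb the resulting isolated braiding generator via a stabilization move (producing the scalar $z$ on the trace side). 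The outcome expresses $T$ as a linear combination of terms in which the gap at position $j$ is filled (at the cost of decreasing the exponent of $t_{j+1}$ or lowering the overall level), plus a strictly shorter monomial on which the corresponding bbm must be performed on the $(j{+}2)$-nd moving strand. Every resulting term is strictly smaller than $T$ under the ordering of Definition~\ref{order}, so by the well-ordering property (extending Proposition~2 of \cite{DL1} from $\Lambda_k$ to the analogous subsets of $\Sigma$) the induction terminates. Crucially, by the commutativity of skein and bbm from Lemma~\ref{lbbm'skein}, applying the same sequence of manipulations to the post-bbm element $t^p T_+ g_1^{\pm 1} w_+$ produces exactly the bbm images of these smaller terms (on the appropriately shifted strands), so both sides of the equation descend in lockstep. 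The braiding tail $w$ rides along passively, since all manipulations act on the $t$-monomial part and are compatible with an arbitrary tail in ${\rm H}_n(q)$.

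The main obstacle will be the careful bookkeeping of strand indices across the pre-bbm and post-bbm sides. When the quadratic relation, after conjugation, turns an isolated braiding generator into a bbm on a strand other than the first, as already occurs in Lemma~\ref{gap} with bbm on the $2$nd strand of $t^{k}$, one must check that this strand-shift is reflected \emph{accurately and symmetrically} on both sides of the equation; otherwise the ``equivalence of systems'' would fail because the reduced equations would not correspond to genuine bbm's. A secondary technical point is verifying that the ordering of Definition~\ref{order}, which was analyzed in \cite{DL1} only for $\Lambda^{\prime}$ and $\Lambda$, remains a well-ordering once one allows gaps and braiding tails in $\Sigma$; this should follow by a straightforward extension, since the sum of exponents and the index strictly decrease at each inductive step. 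Once these points are settled, every $\Sigma$-bbm equation on the first moving strand becomes expressible in terms of $L$-bbm equations with arbitrary strand choices, establishing the claimed equivalence of the two infinite systems.
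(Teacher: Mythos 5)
Your proposal follows essentially the same route as the paper's proof: an induction grounded in Lemma~\ref{gap}, managing the gaps via the expansion $t_{j+1}=g_{j+1}t_jg_{j+1}$ together with the quadratic relation and stabilization (which is precisely what forces bbm's on strands other than the first), termination controlled by the ordering of Definition~\ref{order}, and the observation that the manipulations commute with the bbm so that both sides of each equation descend in parallel while the braiding tail is carried along. The only cosmetic difference is that the paper organizes the induction on the position of the first gap (treating words not starting with a gap separately, since there the prefix survives intact and the bbm stays on the first strand), whereas you induct on the order and the smallest missing index; this is the same argument in a slightly different packaging.
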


\begin{proof}
Let $\tau_{gaps}$ be a word containing gaps in the indices but not starting with one. We use Lemma~13 and 14 in \cite{DL1}.
The point is that when managing the gaps, the first part of the word (before the first gap) remains in tact after managing the gaps
and the same carries through after the performance of a braid band move. That is, the following diagram commutes:
\[
\begin{matrix}
\tau\cdot w & \underset{bbm}{\overset{1^{st} str.}{\longrightarrow}} & t^p  \tau_{+} \cdot w_+ g_1^{\pm 1} \\
\mid & & \mid \\
 man. gaps     & & man. gaps\\
\downarrow & & \downarrow \\
\sum_{i} A_i \tau_{i}\cdot w_i & \underset{bbm}{\overset{1^{st} str.}{\longrightarrow}} & \sum_{i} A_i t^p \tau_{i_+}\cdot w_{i_+}g_1^{\pm1} \\
\end{matrix}
\]
\noindent where $\tau\cdot w \in \Sigma$ and $\tau_i \in L, \ \forall i$.

\smallbreak

In the case where the word $\tau\cdot w \in \Sigma$ starts with a gap, we show that equations obtained from $\tau\cdot w$ are equivalent
to equations obtained from elements $\tau_i\cdot w_i \in \Sigma$, where $\tau_i$ are monomials in $t_i$'s {\it not} starting with a gap, {\it but with the bbm
performed on any strand}. We prove this by induction on the strand $m$ where the first gap occurs and the order of $\tau$ in $\Sigma$:

The case $m=1$ is Lemma~\ref{gap}. Suppose that it holds for all elements where the first gap occurs on the $m^{th}$-strand.
 Let $\tau\cdot w = t_{m+1}^k\cdot \alpha$. Then, using Lemma~13 and 14 in \cite{DL1}, for $m+1$ we have:

\[
\begin{array}{ccc}
t_{m+1}^{k}\cdot \alpha & \overset{bbm\ 1^{st}-str.}{\longrightarrow} & t^{p}t_{m+2}^{k}\alpha_{+}\sigma_1^{\pm 1}\\
\widehat{=} & & \widehat{=}\\
(q-1)\sum_{u=0}^{k-1}q^{u-1}t_m^{u}t_{m+1}^{k-u}\alpha\sigma_{m+1} & \overset{bbm\ 1^{st}-str.}{\longrightarrow} & (q-1)\sum_{u=0}^{k-1}q^{u-1}t^pt_{m+1}^{u}t_{m+2}^{k-u}\alpha_{+}\sigma_{m+1}\sigma_1^{\pm 1}\\
q^{k-1}t_m^{k}\underline{\sigma_{m+1}\alpha} \sigma_{m+1} & \overset{bbm\ 1^{st}-str.}{\longrightarrow} & q^{k-1}t^{p}t_{m+1}^{k}\underline{\sigma_{m+2}\alpha_{+}}\sigma_{m+2}\sigma_1^{\pm 1}\\
\end{array}
\]

Interacting now on the left part the braiding generator $\sigma_{m+1}$ with the looping generators in $\alpha$, we obtain words in $\Sigma$
where the first gap occurs on the $m^{th}$-moving strand. We follow the same procedure on the right part and the result follows by the induction hypothesis.
\end{proof}

\begin{figure}
\begin{center}
\includegraphics[width=2.7in]{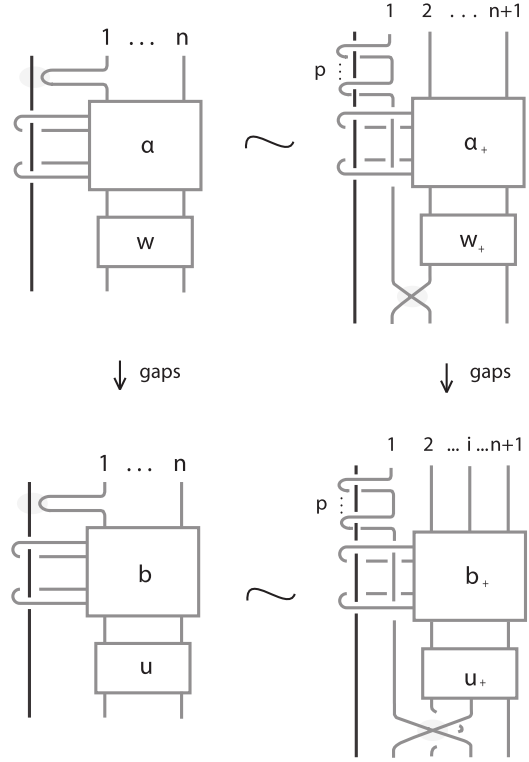}
\end{center}
\caption{Bbm's before and after managing the gaps.}
\label{exgaps}
\end{figure}

\subsection{Ordering the exponents: From bbm's on the ${\rm H}_n(q)$-module $L$ to bbm's on the ${\rm H}_n(q)$-module $\Lambda$}

We now order the exponents and show that equations obtained from elements in the ${\rm H}_n(q)$-module $L$, reduce to equations obtained from elements in the ${\rm H}_n(q)$-module $\Lambda$.

\smallbreak

The monomials in the $t_i$'s that we obtain after managing the gaps are not elements in the set $\Lambda$, since the exponents of the loop generators are not necessarily ordered. In order to express these monomials as sums of elements in the set $\Lambda$ we introduce the following notation:

\begin{nt}
\rm
We set $\tau_{i,i+m}^{k_{i,i+m}}:=t_i^{k_i}t^{k_{i+1}}_{i+1}\ldots t^{k_{i+m}}_{i+m}$, where $m\in \mathbb{N}$ and $k_j\neq 0$ for all $j$. 
\end{nt}

Then we can order the exponents of the $t_i$'s using conjugation. Indeed, we have the following:

\begin{thm}[Theorem~9 \cite{DL1}]\label{exp}
Applying conjugation on an element in $\Sigma$ we have that:

$$\tau_{0,m}^{k_{0,m}}\cdot w\ \widehat{=}\ \sum_{j}{\tau_{0,j}^{\lambda_{0,j}}\cdot w_j},$$

\noindent where $\tau_{0,j}^{\lambda_{0,j}} \in \Lambda$ and $w, w_j \in {\rm H}_n(q),\ \forall j$.
\end{thm}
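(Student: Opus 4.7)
The plan is to induct on the ordering of Definition~\ref{order} restricted to elements of $L\cdot {\rm H}_n(q)$, i.e.\ words $\tau_{0,m}^{k_{0,m}}\cdot w$ whose looping part has consecutive indices. The base of the induction consists of those words for which $(k_0,\ldots,k_m)$ is already weakly decreasing: they lie in $\Lambda\cdot {\rm H}_n(q)$ and there is nothing to show. For the inductive step, I would isolate the rightmost adjacent pair with $k_i<k_{i+1}$ and establish a \emph{swap identity} of the form
\[
t_i^{k_i}t_{i+1}^{k_{i+1}}\cdot w\ \widehat{=}\ t_i^{k_{i+1}}t_{i+1}^{k_i}\cdot w'\ +\ \text{(strictly lower-order terms)},
\]
where the equivalence uses conjugation only.

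The algebraic engine of the swap is the pair of identities $t_{i+1}=g_i t_i g_i$ and $g_i^2=(q-1)g_i+q$. Conjugating the entire word by a suitable $g_i$ preserves the closure (hence $\widehat{=}$); then, repeatedly expanding powers of $t_{i+1}$ as $g_i t_i^{k_{i+1}} g_i$ and collapsing the resulting $g_i^2$ via the Hecke relation yields the ordered target monomial with a modified ${\rm H}_n(q)$-tail, together with finitely many correction terms in which a pair of loop generators has been consumed by the Hecke relation, so that either the total exponent sum decreases or the exponent pattern shifts strictly to the left. In each case, by Definition~\ref{order}, the remainder terms are strictly smaller in the $L$-ordering. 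Applying the inductive hypothesis to every remainder finishes the inductive step, and termination is guaranteed by the well-ordering of the level sets, inherited from the corresponding property of $\Lambda_k$ proved in Proposition~2 of \cite{DL1}.

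The main obstacle will be the systematic tracking of the braiding tail throughout the swap. Conjugation by $g_i$ replaces the original tail $w$ by $g_i^{-1}wg_i$, and the expansion of the powers of $t_{i+1}$ introduces additional $g_i^{\pm 1}$ factors that must be re-expressed in the Jones basis $S$ of ${\rm H}_n(q)$. A priori, commutations of these generators with intermediate $t_j$'s in the looping part could reintroduce looping factors of the same or larger $L$-order, which would spoil the descent. The essential technical verification, therefore, is that every such rewriting stays within the ${\rm H}_n(q)$-span of monomials whose looping skeleton is strictly smaller than the one we started with --- so that the induction genuinely closes and the final expression lies in the ${\rm H}_n(q)$-span of $\Lambda$, as claimed.
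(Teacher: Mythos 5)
The paper never proves Theorem~\ref{exp}: it is imported wholesale as Theorem~9 of \cite{DL1}, and the argument there is indeed the one you outline --- induction on the order of Definition~\ref{order}, with the inductive step furnished by lemmas that transpose an adjacent unordered pair of exponents at the cost of conjugations and strictly lower-order remainders. So your overall plan coincides with the intended proof. But as written your text is a plan, not a proof: the entire mathematical content of the theorem is concentrated in the ``swap identity'' that you merely assert, and you yourself flag the decisive point (that every correction term has strictly smaller looping skeleton) as ``the essential technical verification'' without carrying it out. Until that is done, nothing has been proved.

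Two concrete places where the deferred work is nontrivial. First, the algebra: from Eq.~(\ref{lgen}) one has $t_{i+1}=g_{i+1}t_ig_{i+1}$, not $g_it_ig_i$ as you wrote, so the conjugating element is $g_{i+1}$; expanding $t_{i+1}^{k_{i+1}}=g_{i+1}(t_ig_{i+1}^2)^{k_{i+1}}g_{i+1}^{-1}$ via $g_{i+1}^2=(q-1)g_{i+1}+q$ and pushing the surviving $g_{i+1}$'s through with $g_{i+1}t_i=t_{i+1}g_{i+1}^{-1}$ produces remainders of the form $t_i^{a}t_{i+1}^{b}\cdot(\text{braid})$ with $a+b=k_i+k_{i+1}$. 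Whether such a term is strictly smaller than $t_i^{k_i}t_{i+1}^{k_{i+1}}$ is governed by clauses (b)(ii)($\beta$)--($\gamma$) of Definition~\ref{order}, which compare \emph{absolute values} of exponents from the right; since the $k_j$ range over $\mathbb{Z}\setminus\{0\}$ (and a remainder may have $a=0$ or $b=0$, changing the length and hence the index), the descent is genuinely case-dependent and is exactly what the lemmas preceding Theorem~9 in \cite{DL1} establish. Second, the termination: the well-ordering quoted in this paper (Proposition~2 of \cite{DL1}) concerns the sets $\Lambda_k$, whereas your induction descends through level sets of $L$ (indeed of $\Sigma$) under the order of Definition~\ref{order}; the absence of infinite descending chains there does not formally follow from the statement about $\Lambda_k$ and must be argued separately. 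Both gaps are fillable --- they are filled in \cite{DL1} --- but they are the proof.
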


The procedure we follow in order to prove that equations of the infinite system obtained from elements in $L$ followed by braiding tails in ${\rm H}_{n}(q)$ are equivalent to equations obtained from elements in $\Lambda$ followed by braiding tails, where a braid band move can be performed on any moving strand, is similar to the one described in \cite{DL1}, but, as we mentioned earlier, in this case we do that simultaneously before and after the performance of a braid band move.

\begin{prop}
Equations of the infinite system obtained from elements in $L$ followed by braiding tails in ${\rm H}_{n}(q)$ are equivalent to equations obtained from 
elements in $\Lambda$ followed by braiding tails, where a braid band move can be performed on any moving strand.
\end{prop}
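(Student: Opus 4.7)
The plan is to mimic the structure of Proposition~\ref{imp}, transferring the reduction from the gap-management step to the exponent-ordering step. Starting from $\tau\cdot w$ with $\tau\in L$ and $w\in {\rm H}_n(q)$, whose loop-generator exponents need not be decreasing, I will show that each equation $X_{\widehat{\tau\cdot w}}=X_{\widehat{sl_{\pm i}(\tau\cdot w)}}$ is equivalent to a collection of equations of the same form with looping parts in $\Lambda$, possibly with the bbm acting on a different moving strand.

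The key tool is Theorem~\ref{exp} (Theorem~9 of \cite{DL1}), which converts $\tau_{0,m}^{k_{0,m}}\cdot w$ into a $\mathbb{Z}[q^{\pm 1}]$-linear combination $\sum_j A_j \lambda_j\cdot w_j$ with $\lambda_j\in \Lambda$, using only conjugation (isotopy in ST) and the quadratic and braid relations in ${\rm H}_{1,n}(q)$. All of these moves fix the invariant $X$, so the ordering procedure commutes, on the level of $X$, with the performance of a braid band move. This yields a square
$$
\begin{array}{ccc}
\tau\cdot w & \xrightarrow{\ bbm\ on\ i^{th}\ str.\ } & sl_{\pm i}(\tau\cdot w) \\
\downarrow\ \text{ordering exponents}\ & & \downarrow\ \text{ordering exponents}\ \\
\sum_j A_j\,\lambda_j\cdot w_j & \xrightarrow{\ bbm\ on\ i_j^{th}\ str.\ } & \sum_j A_j\,sl_{\pm i_j}(\lambda_j\cdot w_j)
\end{array}
$$
whose commutativity delivers the equivalence of the two infinite systems. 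The induction will proceed along the total order of Definition~\ref{order}, restricted to each level $L_k$, which is well-ordered by Proposition~2 of \cite{DL1}; elements already in $\Lambda_k$ form the base case, since for them the square is trivial, and for $\tau\cdot w\in L_k\setminus\Lambda_k$ Theorem~\ref{exp} rewrites $\tau\cdot w$ as a sum of strictly smaller-order elements of $\Lambda_k$ followed by braiding tails, where the induction hypothesis applies.

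The main obstacle I expect is the bookkeeping of which moving strand carries the bbm after reordering. The conjugations that reorder the exponents of $\tau$ will in general shift the position of the marked strand, so a bbm originally performed on the $i^{th}$ strand may reappear as a bbm on a different strand in the terms of the reduced sum. This is precisely why the proposition allows bbm's on any moving strand on both sides, and it mirrors the phenomenon already encountered in Proposition~\ref{imp}; without this freedom the induction would not close. Once this strand-tracking is absorbed into the hypothesis, the two infinite systems are seen to generate the same set of relations among values of $X$, completing the reduction and setting the stage for the final elimination of the braiding tails to arrive at the system supported on $\Lambda$ alone.
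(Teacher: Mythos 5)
Your proposal is correct and takes essentially the same route as the paper: the paper's proof is a one-line appeal to Theorem~9 of \cite{DL1}, noting that the exponent-ordering steps (conjugations) are unaffected by the performance of the bbm, which is exactly the commuting square you set up. Your additional remarks on the induction along the well-ordering and on the shifting of the marked strand (the reason bbm's must be allowed on any strand) simply make explicit what the paper defers to \cite{DL1} and to its concluding theorem's discussion of the non-commutation of bbm's with conjugation.
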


\begin{proof}
It follows from Theorem~9 in \cite{DL1}, since all steps followed so as to order the exponents in a monomial in $t_i$'s, remain
the same after the performance of a bbm.
\end{proof}

\subsection{Eliminating the tails: From the ${\rm H}_{n}(q)$-module $\Lambda$ to $\Lambda$}

We now deal with the braiding tails and prove that equations obtained from elements in $\Lambda$ followed by words in ${\rm H}_n(q)$ by performing bbm's on any moving strand, reduce to equations obtained from elements in $L$ by performing a bbm on any strand.

\smallbreak

In \cite{DL1} we eliminate the braiding `tails' by applying conjugation and stabilization moves, denoted by $\widehat{\simeq}$. We have the following:

\begin{thm}[Theorem~10 \cite{DL1}] \label{tails}
Applying conjugation and stabilization moves on a word in the $\bigcup_{\infty}{\rm H}_n(q)$-module, $\Lambda$ we have that:

$$\tau_{0,m}^{k_{0,m}}\cdot w_n\ \widehat{\simeq}\  \sum_{j}{f_j(q,z)\cdot \tau_{0,u_j}^{v_{0,u_j}}},$$

\noindent such that $\sum{v_{0,u_j}}=\sum{k_{0,m}}$ and $\tau_{0,u_j}^{v_{0,u_j}} < \tau_{0,m}^{k_{0,m}}$, for all $j$.
\end{thm}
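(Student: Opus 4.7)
The plan is to prove the theorem by a double induction: the outer induction is on the maximum strand index $n-1$ appearing in $w_n$, and the inner induction is on the length of $w_n$ expressed in Jones' inductive basis for ${\rm H}_n(q)$. By that basis, any $w_n \in {\rm H}_n(q)$ can be written as a $\mathbb{Z}[q^{\pm 1}]$-linear combination of monomials of the form $u \cdot (g_{n-1} g_{n-2} \cdots g_s)$ with $u \in {\rm H}_{n-1}(q)$ and $1 \leq s \leq n$ (the case $s=n$ meaning no trailing block). Since the statement is linear in $w_n$, it suffices to eliminate one outer block at a time and then invoke the inductive hypothesis on the shortened tail.

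The elimination of the outer block $g_{n-1} \cdots g_s$ splits into two cases. If $s > m+1$, the loop monomial $\tau = \tau_{0,m}^{k_{0,m}}$ commutes with every $g_j$ in the block; after a cyclic conjugation around the closed braid the word takes the form $\tau \cdot u \cdot g_{n-1} \cdots g_s$ with $u \in {\rm H}_{n-1}(q)$, and iterated Markov stabilization in reverse (trace rule $\textrm{tr}(a g_{n-1}) = z \, \textrm{tr}(a)$) peels the generators $g_{n-1}, \ldots, g_s$ off one by one, each producing a factor of $z$ and lowering the strand count. If $s \leq m+1$, the block meets the loop generators $t_s, \ldots, t_m$, and I would use the identity $t_i = g_i t_{i-1} g_i$ together with the quadratic relation $g_i^2 = (q-1)g_i + q$ to absorb each $g_i$ of the block into the loop part, splitting each application into a term with strictly shorter tail and a term whose loop factor has strictly smaller order in the sense of Definition~\ref{order}. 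Conjugation around the closed braid is used throughout to reposition pieces cyclically so that each $g_i$ meets precisely the loop generator it needs to interact with.

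Two invariants justify the final conclusion. First, every rewriting step used --- braid relation, quadratic relation, the identity expanding $t_i$ in terms of $g_i$ and $t_{i-1}$, and Markov stabilization --- preserves the total $t$-exponent, so every surviving monomial satisfies $\sum v_{0,u_j} = \sum k_{0,m}$. Second, each reduction strictly decreases a well-chosen complexity measure: either the tail length (the inner induction parameter) drops, or the resulting pure loop monomial lies strictly below $\tau_{0,m}^{k_{0,m}}$ in the ordering of Definition~\ref{order}, via clause (b) on the index or via clause (c) on the exponent pattern. The main obstacle is the case $s \leq m+1$: the explicit bookkeeping of how $g_i \cdot t_i^{k_i}$ interactions expand into sums of pure loop monomials plus residual tails, and the verification that \emph{each} resulting pure monomial is strictly smaller, is the technically delicate part. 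This mirrors the base-case computation carried out in Lemma~\ref{gap} and the ordering analysis of Theorem~9 in \cite{DL1}; I would propagate those explicit reductions through every appearance of $g_i$ in the outer block, which I expect to suffice.
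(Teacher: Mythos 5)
You should first be aware that this paper does not prove Theorem~\ref{tails}: it is quoted verbatim from Theorem~10 of \cite{DL1}, and the present text only states it and applies it (in Proposition~\ref{tailbbm} and the final theorem). So there is no in-paper proof to compare against; what follows measures your outline against the machinery the paper does set up and against the strategy of \cite{DL1}. At the level of strategy your plan is the right one and essentially the one used there: reduce to Jones' inductive basis so that $w_n = u\cdot(g_{n-1}\cdots g_s)$ with $u\in{\rm H}_{n-1}(q)$, peel the top block by conjugation plus the stabilization rule ${\rm tr}(ag_{n-1})=z\,{\rm tr}(a)$ when the block does not reach the loop part ($s\ge m+2$, which is exactly the commutation threshold since $t_m$ involves $g_m$), and otherwise absorb the block into the loop part via $t_i=g_it_{i-1}g_i$ and the quadratic relation, as in Lemma~\ref{gap}. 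The invariance of the total $t$-exponent under every move you list is correct and does give $\sum v_{0,u_j}=\sum k_{0,m}$.

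The gap is that the case $s\le m+1$, which is the entire content of the theorem, is only gestured at, and your proposed termination measure is not adequate as stated. Two concrete points. First, when a $g_i$ from the block interacts with $t_i^{k_i}$ you do not land back in the situation ``pure $\Lambda$-monomial times a shorter tail'': the expansions of Lemma~\ref{gap} type produce loop monomials with \emph{gaps in the indices and unordered exponents}, followed by new braiding letters (including letters of index lower than $s$, so the residual tail need not be shorter in Jones' basis). The induction therefore has to be threaded through the gap-managing and exponent-ordering results (Theorems~8 and~9 of \cite{DL1}, i.e.\ Theorem~\ref{exp} here) at every stage, not just through ``shorter tail or smaller monomial''. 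Second, because the complexity can trade off between the tail and the loop part, a dichotomy of two decreasing quantities does not by itself rule out infinite descent; the argument needs the fact, recorded after Definition~\ref{level}, that the level sets $\Lambda_k$ are \emph{well-ordered} under Definition~\ref{order} (the total $t$-degree $k$ being fixed by your first invariant), and you never invoke well-ordering. Finally, a small caution on your claim that \emph{each} surviving monomial is strictly smaller: the paper's own worked example eliminates the tail of $tt_1t_2\cdot g_1g_2g_1$ and the leading output term is $tt_1t_2$ itself, which under Definition~\ref{order} (which ignores braiding parts) is not strictly below the input; so the strictness you are trying to establish must be interpreted with care, and your proof should say precisely in which sense the order drops.
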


Applying the same technique before and after the performance of a bbm, we have the following result:

\begin{prop}\label{tailbbm}
Equations of the infinite system obtained from elements in $\Lambda$ followed by words in ${\rm H}_n(q)$ are equivalent to equations
 obtained from elements in $L$ by performing a braid band move on any moving strand.
\end{prop}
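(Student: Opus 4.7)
The plan is to follow the template established in the previous two propositions: show that the tail--elimination procedure of Theorem~\ref{tails} commutes with the performance of a braid band move, yielding a commutative diagram that translates the equation $X_{\widehat{\tau \cdot w}} = X_{\widehat{sl_{\pm i}(\tau \cdot w)}}$ (where $\tau \in \Lambda$ and $w \in {\rm H}_n(q)$) into a $\mathbb{Z}[q^{\pm 1},z^{\pm 1}]$-linear combination of equations $X_{\widehat{\lambda}} = X_{\widehat{sl_{\pm j}(\lambda)}}$ with $\lambda \in \Lambda$.

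First, I would apply Theorem~\ref{tails} to the left--hand side, obtaining $\tau \cdot w \,\widehat{\simeq}\, \sum_{j} f_j(q,z)\, \lambda_j$, where each $\lambda_j \in \Lambda$ has strictly smaller order than $\tau$. On the right--hand side, by Lemma~\ref{bbm1} (after conjugating to bring the bbm to the first moving strand), we have $sl_{\pm i}(\tau \cdot w) = t^p \tau_+ \cdot w_+ \cdot g_1^{\pm 1}$. Here the prefix $t^p \tau_+$ is again an element of $\Lambda$, and $w_+ g_1^{\pm 1}$ is a braiding tail lying in ${\rm H}_{n+1}(q)$, so Theorem~\ref{tails} applies verbatim to the right--hand side and produces $sl_{\pm i}(\tau \cdot w) \,\widehat{\simeq}\, \sum_j f_j(q,z)\, sl_{\pm i'}(\lambda_j)$ for appropriate shifted strand indices $i'$.

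The crucial point, mirroring the proof of the previous proposition, is that the conjugations and stabilization moves of Theorem~\ref{tails} act only on the braiding tail and on strands created during elimination, and hence commute with the fixed first--strand braid band move. This yields the commutative diagram
\[
\begin{matrix}
\tau \cdot w & \overset{bbm}{\longrightarrow} & sl_{\pm i}(\tau \cdot w) \\
\widehat{\simeq} \downarrow & & \downarrow \widehat{\simeq} \\
\sum_j f_j(q,z)\, \lambda_j & \overset{bbm}{\longrightarrow} & \sum_j f_j(q,z)\, sl_{\pm i'}(\lambda_j)
\end{matrix}
\]
Evaluating with the invariant $X$, the equation $X_{\widehat{\tau \cdot w}} = X_{\widehat{sl_{\pm i}(\tau \cdot w)}}$ decomposes as $\sum_j f_j(q,z)\, X_{\widehat{\lambda_j}} = \sum_j f_j(q,z)\, X_{\widehat{sl_{\pm i'}(\lambda_j)}}$, which is a $\mathbb{Z}[q^{\pm 1},z^{\pm 1}]$-linear combination of bbm equations on elements of $\Lambda$, as required.

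The main obstacle lies in bookkeeping of strand indices: the stabilization moves used in Theorem~\ref{tails} change the number of moving strands, so the strand on which the braid band move is performed may get renumbered during the elimination procedure. What must be verified is that, at each elementary step of the tail elimination, the corresponding bbm on the modified braid still represents a well--defined bbm on some moving strand of a $\Lambda$--element; equivalently, that each conjugation or stabilization used for eliminating the tail can be chosen to act on a strand disjoint from the one carrying the bbm. This is essentially local and follows by an induction identical in structure to the one behind Theorem~\ref{tails}, so once this compatibility is checked, the rest of the argument is formal.
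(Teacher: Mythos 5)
Your overall strategy---apply Theorem~\ref{tails} to both sides of the equation $X_{\widehat{\tau\cdot w}}=X_{\widehat{sl_{\pm i}(\tau\cdot w)}}$ and argue that tail elimination commutes with the bbm---is the same as the paper's. However, you explicitly defer the one nontrivial point: you write that ``what must be verified is that \dots each conjugation or stabilization used for eliminating the tail can be chosen to act on a strand disjoint from the one carrying the bbm,'' and then assert this ``follows by an induction identical in structure to the one behind Theorem~\ref{tails}.'' That is precisely the step the paper does \emph{not} leave to an analogous induction; it supplies a concrete device to make the two eliminations literally identical. Namely, after performing the bbm one \emph{cables the new parallel strand together with the surgery strand}. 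Once cabled, the moving part of $sl_{\pm i}(\tau\cdot w)$ is the same word $\tau\cdot w$ as before the bbm, so the very same sequence of conjugations and stabilization moves from Theorem~\ref{tails} applies verbatim on both sides, producing the same coefficients $f_j(q,z)$; uncabling at the end turns each resulting $\lambda_j$ back into $sl(\lambda_j)$. Without the cabling, your claim that the stabilizations ``act only on the braiding tail and on strands created during elimination'' is not obviously true: the bbm inserts $t^p$ and $g_1^{\pm1}$ and shifts all indices, so the elimination on the right-hand side is a priori a different computation, and your strand-renumbering worry is exactly where the argument would stall.

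A second, smaller issue: you propose to first conjugate so that the bbm sits on the first moving strand and then commute the elimination past ``the fixed first-strand braid band move.'' But the paper is explicit (see the proof of the final theorem) that braid band moves do \emph{not} commute with conjugation---this is the very reason the reduced system must retain bbm's on \emph{every} moving strand of elements of $\Lambda$, rather than only on the first. Normalizing the bbm to the first strand before eliminating the tail therefore risks hiding the dependence on the strand index $i$ that the statement of the proposition is careful to keep. With the cabling trick in place, neither of these difficulties arises, and the rest of your argument (evaluating with $X$ and reading off a $\mathbb{Z}[q^{\pm1},z^{\pm1}]$-linear combination of bbm equations) goes through as you describe.
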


\begin{proof}
We perform a bbm and we cable the parallel strand with the surgery strand. We then apply Theorem~10 \cite{DL1} before and after
the performance of the bbm and uncable the parallel strand. The proof is illustrated in Figure~\ref{tailfig}.
\end{proof}

\begin{figure}[\!ht]
\begin{center}
\includegraphics[width=5.3in]{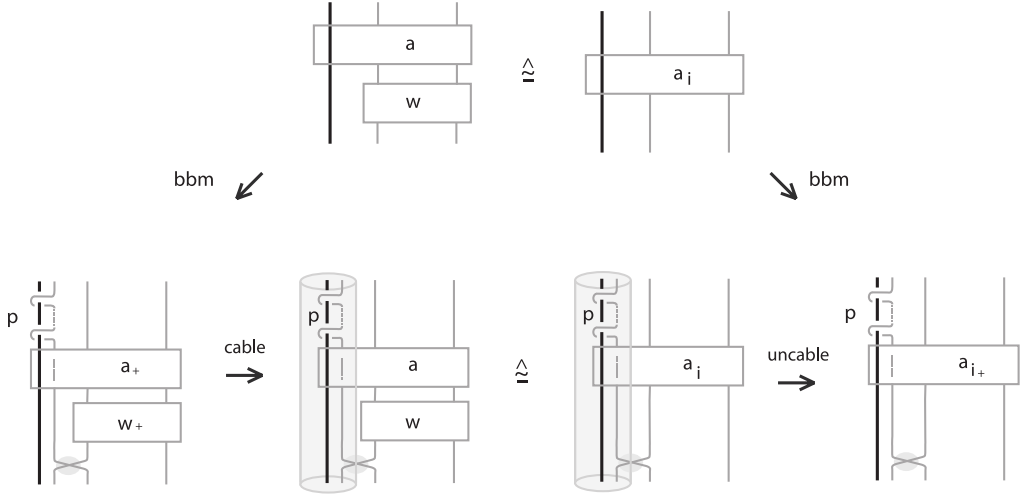}
\end{center}
\caption{ The proof of Proposition~\ref{tailbbm}. }
\label{tailfig}
\end{figure}

\begin{ex}
In this example we demonstrate Proposition~\ref{tailbbm}.
\[
\begin{matrix}
tt_1t_2\cdot g_1g_2g_1 & \underset{bbm}{\overset{1^{st} str.}{\longrightarrow}} & t^p t_1t_2t_3 \cdot g_2g_3g_2 g_1^{\pm 1} \\
\mid & & \mid \\
elim. tails     & & elim. tails\\
\downarrow & & \downarrow \\
(q-1)(q^2-q+1)\cdot tt_1t_2 & \underset{bbm}{\overset{1^{st} str.}{\longrightarrow}} & (q-1)(q^2-q+1)\cdot t^pt_1t_2t_3 g_1^{\pm1} \\
+ & & + \\
q(q-1)^2z \cdot tt_1^2 & \underset{bbm}{\overset{1^{st} str.}{\longrightarrow}} & q(q-1)^2z \cdot t^pt_1t_2^2 g_1^{\pm1}\\
+ & & + \\
a\cdot t^2t_1 & \underset{bbm}{\overset{1^{st} str.}{\longrightarrow}} & a\cdot t^pt_1^2t_2g_1^{\pm1}\\
+ & & + \\
[q^2(q-1)(q^2-q+1)z^2]\cdot t^3 & \underset{bbm}{\overset{1^{st} str.}{\longrightarrow}} & [q^2(q-1)(q^2-q+1)z^2]\cdot t^pt_1^3g_1^{\pm1}\\
\end{matrix}
\]

\noindent where $a= q^3z+q^2(q-1)^2+2q^2(q-1)^2z+q(q-1)^4z$.
\end{ex}

Following the same procedure and applying the same techniques as in \cite{DL1}, we obtain the following:

\begin{thm}
It suffices to perform braid band moves on any strand on elements in the basis $\Lambda$ of $\mathcal{S}({\rm ST})$, in order to obtain the equations needed for computing the Homflypt skein module of the lens spaces $L(p,1)$.
\end{thm}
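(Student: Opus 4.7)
The plan is to chain together the five reductions established in the preceding propositions, so that the statement becomes the composition of a finite sequence of equivalences of infinite systems. Starting with the defining relation $X_{\widehat{a}} = X_{\widehat{sl(a)}}$ for arbitrary $a\in B_{1,n}$, I would first appeal to Lemma~\ref{bbm1} to locate every braid band move on the first moving strand, and then invoke Proposition~\ref{bbm's'} (whose engine is Lemma~\ref{lbbm'skein}) to restrict the left‐hand side to words in the canonical basis $\Sigma'$ of ${\rm H}_{1,n}(q)$. This is the first reduction.

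The second step is to pass from $\Sigma'$ to $\Sigma$ by means of Proposition~\ref{picprop2}: writing each $t'_i$ in terms of $t_i$ and braiding generators and expanding in the basis $\Sigma$, one obtains, for every equation on an element of $\Sigma'$, an equivalent collection of equations on elements of $\Sigma$ followed by braiding tails, and the bbm is still performed on the first moving strand. The third step is Proposition~\ref{imp}, which handles gaps in the indices of the $t_i$'s; its proof, based on Lemma~\ref{gap} together with the inductive scheme using Lemmas~13 and~14 of \cite{DL1}, shows that managing gaps commutes with the bbm at the cost of allowing the bbm to be performed on \emph{any} moving strand. This yields equations on the ${\rm H}_n(q)$-module generated by $L$.

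The fourth step is to order the exponents of the looping generators using Theorem~\ref{exp} applied simultaneously before and after a bbm; since conjugation in the closed mixed braid commutes with bbm on any fixed strand, this reduces the equations to ones on the ${\rm H}_n(q)$-module generated by $\Lambda$. Finally, the fifth step eliminates the braiding tails by Proposition~\ref{tailbbm}, whose picture-proof (Figure~\ref{tailfig}) shows that cabling the parallel and surgery strands together, applying Theorem~\ref{tails} on both sides of the bbm, and then uncabling, produces equations purely on elements of $\Lambda$, still with the bbm allowed on any moving strand. Concatenating the five equivalences yields the claim.

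The main obstacle, and the reason the theorem is not immediate from its predecessors, is the verification that each reduction commutes with the braid band move, that is, that the vertical arrows (algebraic manipulation inside $\Sigma'$, $\Sigma$, $L$, or $\Lambda$) and the horizontal arrows (performance of a bbm) in each commutative diagram can be interchanged. This compatibility has been secured one step at a time by Lemma~\ref{lbbm'skein}, Proposition~\ref{picprop2}, Proposition~\ref{imp}, and Proposition~\ref{tailbbm}, at the price of progressively enlarging the set of strands on which the bbm is permitted. The final theorem is therefore essentially bookkeeping: assembling these four commuting squares into a single commuting diagram whose leftmost column is the original infinite system and whose rightmost column is the infinite system obtained from $\Lambda$ by performing bbm's on every moving strand.
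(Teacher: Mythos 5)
Your overall architecture matches the paper's proof exactly: the theorem is obtained by concatenating the chain of reductions $\Sigma'\to\Sigma\to L\to\Lambda\text{-module}\to\Lambda$ established by Lemma~\ref{lbbm'skein}, Propositions~\ref{bbm's'}, \ref{picprop2}, \ref{imp}, \ref{tailbbm} and Theorems~8--10 of \cite{DL1}, with the key compatibility being that braid band moves commute with the stabilization moves and the skein (quadratic) relation.

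There is, however, one genuine error in your step four. You justify the exponent-ordering reduction by asserting that ``conjugation in the closed mixed braid commutes with bbm on any fixed strand.'' The paper states precisely the opposite: braid band moves and conjugation do \emph{not} commute, and this failure of commutativity is the entire reason the final system must include braid band moves performed on \emph{every} moving strand of each element of $\Lambda$, rather than only on the first. Indeed, if conjugation did commute with the bbm on a fixed strand, then Proposition~\ref{bbm's'} would already suffice and the whole apparatus of allowing bbm's on arbitrary strands --- which you yourself invoke in step three and acknowledge in your closing paragraph as ``progressively enlarging the set of strands'' --- would be unnecessary. The correct statement is that each conjugation used to manage gaps, order exponents, or eliminate tails may displace the strand on which the slide is performed, so the reduction closes up only if the target system is taken over all moving strands. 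Your conclusion survives because you carry the ``bbm on any strand'' qualifier through anyway, but the stated justification contradicts both the paper and the rest of your own argument, and should be replaced by the observation that Theorem~\ref{exp} is applied simultaneously before and after the bbm, at the cost of the bbm landing on a possibly different strand.
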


\begin{proof}
The proof is based on Theorems~8, 9 and 10 from \cite{DL1} and the fact that the braid band moves commute with the stabilization moves
and the skein (quadratic) relation (Lemma~\ref{bbm1} in this paper). The fact that the braid band moves and conjugation do not commute, results in the need of performing braid band moves on all moving strands of the elements in $\Lambda$.
\end{proof}

\begin{figure}
\begin{center}
\includegraphics[width=2.5in]{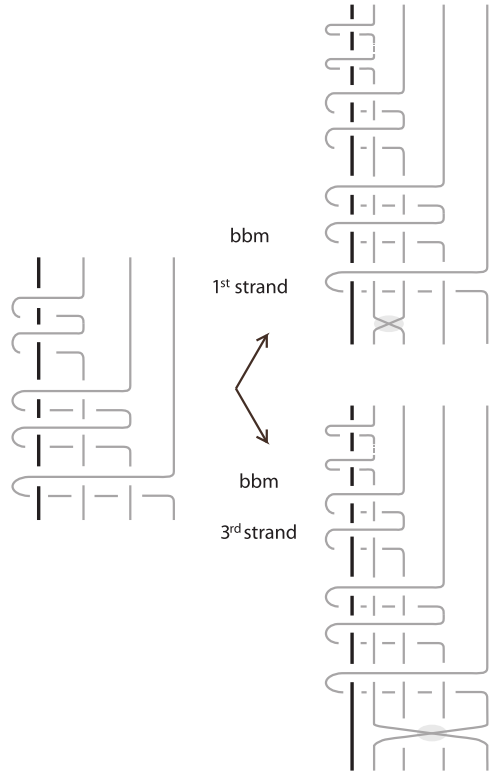}
\end{center}
\caption{The performance of a bbm on the $1^{st}$ and on the $3^{rd}$ strand of an element in $\Lambda$.}
\label{exlambda}
\end{figure}

To summarize the above results, we have that the computation of the Homflypt skein module of the lens spaces $L(p,1)$ reduces to:

\begin{itemize}
\item[(i)] considering elements in $\Lambda$ and
\smallbreak
\item[(ii)] performing bbm's on any strand.
\end{itemize}

In particular, in order to compute $\mathcal{S}(L(p,1))$, it suffices to solve the infinite system of equations:

$$ X_{\widehat{\tau}}\ =\ X_{\widehat{sl_i(\tau)}},$$

\noindent where $sl_i(\tau)$ is the result of the performance of bbm on the $i^{th}$-moving strand of $\tau \in \Lambda$, for all $\tau \in \Lambda$ and for all $i$.

\section{Conclusions}

In this paper we related $\mathcal{S}(L(p,1))$ to $\mathcal{S}({\rm ST})$ and in particular we showed that in order to compute the Homflypt skein module of the lens spaces $L(p,1)$, we need to solve an infinite system resulting from the performance of braid band moves on any strand on elements in the basis $\Lambda$ of $\mathcal{S}({\rm ST})$. This is a very technical and difficult task and is the subject of a sequel paper.

\end{document}